\def\ID{\mathbb D}
\def\IR{\mathbb R}
\def\IN{\mathbb N}
\def\IC{\mathbb C}
\def\IT{\mathbb T}
\newcommand{\intd}{\text{d}}
\newcommand{\intervaloo}[2]{\mathopen{(}#1,#2\mathclose{)}}
\newcommand{\intervalco}[2]{\mathopen{[}#1,#2\mathclose{)}}
\newcommand{\intervaloc}[2]{\mathopen{(}#1,#2\mathclose{]}}
\newcommand{\intervalcc}[2]{\mathopen{[}#1,#2\mathclose{]}}
\theoremstyle{plain}
\newtheorem{theorem}{Theorem}[section]
\newtheorem{lemma}[theorem]{Lemma}
\newtheorem{proposition}[theorem]{Proposition}
\theoremstyle{definition}
\newtheorem{definition}[theorem]{Definition}
\theoremstyle{remark}
\newtheorem{remark}[theorem]{Remark}
\numberwithin{equation}{section}
\title[Rate of growth of random analytic functions]{Rate of growth of random analytic functions, with an application to linear dynamics}
\author{Kevin Agneessens and Karl-G. Grosse-Erdmann}
\address{Kevin Agneessens, 
D\'epartement de Math\'ematique, Universit\'e de Mons, 20 Place du Parc, 7000 Mons, Belgium}
\email{agneessens.kevin@gmail.com}
\address{Karl-G. Grosse-Erdmann, 
D\'epartement de Math\'ematique, Universit\'e de Mons, 20 Place du Parc, 7000 Mons, Belgium}
\email{kg.grosse-erdmann@umons.ac.be}
\thanks{The first author was a Research Fellow of the Fonds de la Recherche Scientifique - FNRS. The second author was supported by the Fonds de la Recherche Scientifique - FNRS under Grant n\textsuperscript{o} CDR J.0078.21.}
\keywords{Wiman-Valiron theory, Wiman-Valiron inequality, Lévy's phenomenon, subgaussian random variable, frequently hypercyclic entire function, rate of growth}
\subjclass[2020]{30B20, 30D20, 47A16}
\begin{document}

\begin{abstract}
We obtain Wiman-Valiron type inequalities for random entire functions and for random analytic functions on the unit disk that improve a classical result of Erd\H{o}s and Rényi and recent results of Kuryliak and Skaskiv. Our results are then applied to linear dynamics: we obtain rates of growth, outside some exceptional set, for analytic functions that are frequently hypercyclic for an arbitrary chaotic weighted backward shift.
\end{abstract}

\maketitle

\section{Introduction}\label{s-intro}

We are interested in what might be called the probabilistic Wiman-Valiron theory. Our investigation leads to an extension of a classical result of Erd\H{o}s and Rényi, and to an improvement of recent results of Kuryliak and Skaskiv, see Theorems \ref{t-main1} and \ref{t-main2} below. We also present an application of our work to linear dynamics. 

Let us start by explaining the background.

\subsection{Wiman-Valiron theory} The classical theory of Wiman and Valiron studies the relationship between the maximum modulus and the maximum term of an entire function. More precisely, let $f(z) = \sum_{n=0}^\infty a_n z^n$, $z\in \IC$, be a non-constant entire function. Denoting, as usual, by
\begin{equation}\label{eq-mm}
M_f(r) = \max_{|z|=r}|f(z)| \ \text{ and } \ \mu_f(r) = \max_{n\geq 0}|a_n|r^n
\end{equation}
the maximum modulus and the maximum term of $f$ for $r\geq 0$, respectively, then one of the main results of the Wiman-Valiron theory states that, for any $\delta>0$, there is a (measurable) set $E\subset [0,\infty)$ of finite logarithmic measure and some $C>0$ such that
\begin{equation}\label{eq-wm}
M_f(r)\leq C \mu_f(r)\big(\log \mu_f(r)\big)^{\frac{1}{2}+\delta}, \ r\notin E,
\end{equation}
see Wiman \cite{Wiman1914} and Valiron \cite{Valiron1918}, \cite{Valiron1920}, \cite[p.\ 106]{Valiron1923}; recall that $E$ is of finite logarithmic measure if $\int_{E\cap\intervalco{1}{\infty}} \tfrac{1}{r}\intd r<\infty$. Inequality \eqref{eq-wm} was later improved by Rosenbloom \cite{Rosenbloom1962} who showed that, for any $\delta>0$, there is a set $E\subset [0,\infty)$ of finite logarithmic measure and some $C>0$ such that
\begin{equation}\label{eq-wm2}
M_f(r)\leq C \mu_f(r)\big(\log \mu_f(r)\big)^{\frac{1}{2}}\big(\log\log\mu_f(r)\big)^{1+\delta}, \ r\notin E.
\end{equation}
Further strengthenings can be found in \cite{Rosenbloom1962}, \cite{Hayman1974} and \cite[Theorem 6.23]{Hayman1989}; see also \cite{GrosseErdmann2024} for a survey.

For introductions to the Wiman-Valiron theory we refer to \cite{Hayman1974}, \cite[Section 6.5]{Hayman1989} and \cite{JankVolkmann1985}.

\subsection{Probabilistic Wiman-Valiron theory} A probabilistic variant of inequality \eqref{eq-wm} was first considered by Lévy \cite{Levy1930}, see also \cite[p.\ 55]{ErdosRenyi1969}. Let $(X_n)_{n\geq 0}$ be an independent sequence of Steinhaus random variables defined on a probability space $(\Omega, \mathcal{A},\mathbb{P})$; recall that a complex random variable is Steinhaus if it is uniformly distributed on the unit circle $\IT$. For an entire function $f(z)= \sum_{n=0}^\infty a_n z^n$, let us consider 
\[
\sum_{n=0}^\infty a_n X_n(\omega) z^n, \ z\in \IC, \omega\in \Omega.
\]
This defines a random entire function. Then Lévy showed the following, under some regularity assumptions on the coefficients $a_n$: for any $\delta>0$ there exists almost surely a set $E\subset [0,\infty)$ of finite logarithmic measure and some $C>0$ such that
\begin{equation}\label{eq-pwm}
\max_{|z|=r} \Big|\sum_{n=0}^\infty a_n X_n z^n\Big| \leq C \mu_f(r)\big(\log \mu_f(r)\big)^{\frac{1}{4}+\delta}, \ r\notin E.
\end{equation}
In other words, randomizing the coefficients allows to lower the exponent $\frac{1}{2}$ in \eqref{eq-wm} to $\frac{1}{4}$ in \eqref{eq-pwm}; in the recent literature, this is referred to as \textit{Lévy's phenomenon}, see for example \cite{KuryliakSkaskivZrum2014} and \cite{Kuryliak2017}.

Erd\H{o}s and Rényi \cite{ErdosRenyi1969} showed that \eqref{eq-pwm} holds for any non-constant entire function $f$ if the $X_n$ are independent Rademacher random variables, that is, if they are uniformly $\pm 1$-distributed. More importantly, they also obtained the same improvement for inequality \eqref{eq-wm2}: for any $\delta>0$ there exists almost surely a set $E\subset [0,\infty)$ of finite logarithmic measure and some $C>0$ such that
\begin{equation}\label{eq-pwm2}
\max_{|z|=r} \Big|\sum_{n=0}^\infty a_n X_n z^n\Big| \leq C \mu_f(r)\big(\log \mu_f(r)\big)^{\frac{1}{4}}\big(\log\log\mu_f(r)\big)^{1+\delta}, \ r\notin E.
\end{equation}

For our intended applications to the theory of linear dynamics these results are, however, not good enough: we need to consider complex random variables $X_n$ of full support, for example complex Gaussian random variables; see Section \ref{s-lindyn}. It turned out that a similar problem had already been posed by O. B. Skaskiv who had asked whether Lévy’s phenomenon also holds for unbounded random variables, see \cite[p.\ 12]{Kuryliak2017}. Kuryliak \cite[Theorem 3, Corollary 1]{Kuryliak2017} has obtained a positive answer for an independent centred subgaussian sequence of random variables (see Definition \ref{definitionSubgaussian}), however with exponent $\frac{3}{2}+\delta$ instead of $1+\delta$ in \eqref{eq-pwm2}. 

Our first main result confirms Lévy's phenomenon for independent centred subgaussian sequences of random variables.

\begin{theorem}\label{t-main1}
Let $f(z)=\sum_{n= 0}^\infty a_{n}z^n$ be a non-constant entire function and $(X_n)_{n\geq 0}$ an independent centred subgaussian sequence. Then $\sum_{n= 0}^\infty a_{n}X_{n}z^n$ defines almost surely an entire function. Moreover, for every $\delta>0$, there exists a set $E\subset\intervalco{0}{\infty}$ of finite logarithmic measure and a constant $C>0$ such that, almost surely, there exists $r_0>0$ such that
\[
\max_{|z|=r} \Big|\sum_{n=0}^{\infty}a_{n}X_{n}z^n\Big| \leq C\mu_{f}(r)\big(\log\mu_{f}(r)\big)^{\frac{1}{4}}\big(\log\log \mu_{f}(r)\big)^{1+\delta}
\]
for every $r\geq r_0$, $r\notin E$.
\end{theorem}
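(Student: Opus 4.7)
\emph{Strategy.} My plan is to combine a subgaussian concentration estimate at a single point on $|z|=r$, a Bernstein-type discretization of the circle, a Borel--Cantelli argument along a deterministic sequence of radii, and an interpolation step controlled by the exceptional sets of classical Wiman-Valiron theory.

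\emph{Almost-sure analyticity.} The subgaussian tail bound $\proba(|X_n|>n)\le 2\exp(-n^2/(2\sigma^2))$ is summable, so by Borel--Cantelli $|X_n|\le n$ eventually a.s., whence $\limsup|a_nX_n|^{1/n}=\limsup|a_n|^{1/n}=0$ and $\sum a_nX_nz^n$ is almost surely entire.

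\emph{Concentration, discretization, Borel--Cantelli.} For $z$ with $|z|=r$, subgaussianity of the $X_n$ yields $\proba(|S(z)|>\lambda)\le 2\exp(-c\lambda^2/V(r)^2)$, where $S(z):=\sum_n a_nX_nz^n$ and $V(r)^2:=\sum_n|a_n|^2r^{2n}$. A careful application of Valiron's lemma (the profile $|a_n|r^n$ is essentially Gaussian of width $\sqrt{\nu_f(r)}$ around the central index) combined with a sharp Wiman-Valiron bound on $\nu_f(r)$ should give, outside a deterministic set $E_1$ of finite logarithmic measure, the estimate $V(r)^2\le C\mu_f(r)^2(\log\mu_f(r))^{1/2}(\log\log\mu_f(r))^{1/2+\delta/8}$. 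With $\lambda(r):=\mu_f(r)(\log\mu_f(r))^{1/4}(\log\log\mu_f(r))^{1+\delta}$ this yields a single-point failure probability of at most $2\exp(-c(\log\log\mu_f(r))^{3/2+\delta/2})$. Next I cover $|z|=r$ by $N(r):=\lceil C(\log\mu_f(r))^{1+\delta/4}\rceil$ equispaced points and apply a Cauchy/Bernstein derivative estimate (after truncating $S$ at the effective degree) to obtain $\max_{|z|=r}|S(z)|\le 2\max_j|S(z_j)|$. The union bound then gives, outside a deterministic set $E_2$ of finite logarithmic measure governing $\nu_f$ and the truncation tail,
\[
\proba\Bigl(\max_{|z|=r}|S(z)|>2\lambda(r)\Bigr)\le \exp\bigl(-c'(\log\log\mu_f(r))^{3/2+\delta/2}\bigr).
\]
Setting $E:=E_1\cup E_2$ and choosing a deterministic sequence $r_k\notin E$ along which $\mu_f(r_k)$ roughly doubles (possible by the finite logarithmic measure of $E$ and the continuity of $\mu_f$), the failure probabilities along $(r_k)$ are summable since $\log\log\mu_f(r_k)\sim\log k$, so Borel--Cantelli produces $k_0(\omega)$ with the bound holding at every $r_k$, $k\ge k_0$. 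For general $r\ge r_{k_0}$ with $r\notin E$, letting $k$ be maximal such that $r_k\le r$, monotonicity of $\max_{|z|=r}|S(z)|$ (maximum principle) and of $\mu_f$ yields $\max_{|z|=r}|S(z)|\le \max_{|z|=r_{k+1}}|S(z)|\le 2\lambda(r_{k+1})\le C\lambda(r)$, as required.

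\emph{Main obstacle.} The decisive technical point is controlling $V(r)^2$ sharply enough that $\lambda(r)^2/V(r)^2$ grows strictly faster than $(\log\log\mu_f(r))^{3/2}$; this forces a sharpening of Valiron's lemma and of the Wiman-Valiron bound on $\nu_f$ beyond what suffices for Kuryliak's $(\log\log)^{3/2+\delta}$ exponent. A secondary difficulty is that the Bernstein-type estimate for $S'$ on the circle must be applied after truncating $S$ at a level governed by $\nu_f(r)$, with the tail contribution carefully absorbed into the exceptional set $E_2$.
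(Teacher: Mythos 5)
Your overall architecture (pointwise subgaussian concentration at a fixed point of the circle with variance proxy $V(r)^2=S_f(r)^2=\sum_n|a_n|^2r^{2n}$, a finite net plus a Bernstein estimate after truncation, Borel--Cantelli along a distinguished sequence of radii, maximum principle in between, and a final conversion from $S_f$ to $\mu_f$) is viable and genuinely different from the paper, which controls the supremum over the whole circle in one stroke via Kahane's theorem on random trigonometric polynomials with subgaussian coefficients (Lemma \ref{rogHCELemmasupTrigoPoly}), applied to the truncation at degree $N^\alpha$ with $N\geq(\log\mu_f(r))^{3/2+\delta}$ (Lemma \ref{rogHCELemmaProbaEstimate}), then runs a level-set Borel--Cantelli argument (Lemma \ref{rogHCELemmaSubseq}, Theorem \ref{rogHCEThmMain}) and converts $S_f$ to $\mu_f$ by $S_f^2\leq\mu_fG_f$ and Rosenbloom (Lemma \ref{l-roseng}). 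Kahane's lemma buys polynomial failure probabilities $C/N^{2}$ and spares you the net/Bernstein bookkeeping; your route is more elementary but essentially reproves a Salem--Zygmund-type bound by hand. However, as written your proof has a genuine gap exactly at the step you call the main obstacle: the estimate $V(r)^2\leq C\mu_f(r)^2(\log\mu_f(r))^{1/2}(\log\log\mu_f(r))^{1/2+\delta/8}$ is never established and does not follow from the Wiman--Valiron inequalities you invoke; it amounts to a Wiman-type inequality with secondary factor $(\log\log)^{1/2+\delta/8}$ for the auxiliary entire function $\sum_n|a_n|^2w^n$, strictly sharper than Rosenbloom's $(\log\log)^{1+\delta}$, and you leave it as an unresolved ``sharpening of Valiron's lemma.'' So the decisive estimate of your argument is missing.

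The good news is that your requirement $\lambda(r)^2/V(r)^2\gtrsim(\log\log\mu_f(r))^{3/2}$ is a miscalculation of what Borel--Cantelli needs, and the gap disappears once the exponents are recomputed. From $S_f(r)^2\leq\mu_f(r)G_f(r)$ and Lemma \ref{l-roseng} with parameter $\delta/2$ you get, off a set of finite logarithmic measure, $V(r)^2\leq C\mu_f(r)^2(\log\mu_f(r))^{1/2}(\log\log\mu_f(r))^{1+\delta/2}$, hence with $\lambda(r)=\mu_f(r)(\log\mu_f(r))^{1/4}(\log\log\mu_f(r))^{1+\delta}$ you have $\lambda(r)^2/V(r)^2\gtrsim(\log\log\mu_f(r))^{1+3\delta/2}$. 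A single-point failure probability $\exp(-c(\log\log\mu_f(r))^{1+3\delta/2})$ beats any fixed power of $\log\mu_f(r)$, so the union bound over the net survives, and along radii with $\log\mu_f(r_k)\asymp k$ the failure probabilities $\exp(-c(\log k)^{1+3\delta/2})$ are summable; exponent $1+\varepsilon$ suffices, $3/2$ is not needed. Two calibration points still must be repaired. First, the net size and truncation degree $(\log\mu_f(r))^{1+\delta/4}$ are too small: with the standard tail control available off an exceptional set, namely $\sum_{n>D}|a_n|r^n\leq D^{-1}\sum_n n|a_n|r^n\leq D^{-1}G_f(r)(\log G_f(r))^{1+\delta}$ (Lemma \ref{rogHCELemmaseriesEstimate}) together with $|X_n|\lesssim\sqrt{\log n}$ a.s.\ (or the event $|X_n|\leq n^{1-1/\alpha}$), making the truncation error $\leq\mu_f(r)$ forces $D\asymp(\log\mu_f(r))^{3/2+\delta}$, exactly the threshold of Lemma \ref{rogHCELemmaProbaEstimate}; this only adds a harmless factor $\exp(O(\log\log\mu_f(r)))$ to the union bound. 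Second, ``choosing $r_k\notin E$ along which $\mu_f$ roughly doubles'' is not always possible, since an entire doubling level $\{r:\mu_f(r)\in[2^k,2^{k+1}]\}$ may lie inside $E$; you need the device of Lemma \ref{rogHCELemmaSubseq}: keep only the levels meeting the complement of $E$, take the supremum of each, and compare a given good $r$ with the representative of its own level, which guarantees $\lambda$ changes by at most a constant factor. With these repairs your route does yield Theorem \ref{t-main1}.
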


Note that the exceptional set and the constant are independent of $\omega\in\Omega$.

This theorem contains the result of Erd\H{o}s and Rényi as a special case, and it improves that of Kuryliak.

\subsection{Wiman-Valiron theory in the disk}
The Wiman-Valiron theory in the unit disk $\ID$ was initiated by K\"ov\'ari \cite{Kovari1966}. Let $f(z)=\sum_{n=0}^\infty a_nz^n$, $z\in \ID$, be a non-constant analytic function in $\ID$. The maximum modulus and maximum term functions are defined as in \eqref{eq-mm}. Unlike for the plane, there does not seem to be a canonical version of the Wiman-Valiron inequality \eqref{eq-wm}: the place of an additional term $\frac{1}{1-r}$ and the way the exceptional set is measured vary in the literature, see \cite[Theorem 1]{Kovari1966}, \cite{Suleimanov1980}, \cite[Theorem 2]{FentonStrumia2009}, \cite[Theorem 1]{SkaskivKuryliak2020}; see also \cite{GrosseErdmann2024} for a survey. 

Our starting point was a special case of a Wiman-Valiron inequality established by Sule\u{\i}\-ma\-nov \cite{Suleimanov1980}, see also \cite[p.\ 83]{KuryliakSkaskivShapovalovska2016} and \cite{GrosseErdmann2024}: for any $\delta>0$, there is a set $E\subset [0,1)$ of finite logarithmic measure and some $C>0$ such that
\[
M_f(r)\leq C \frac{\mu_f(r)}{(1-r)^{1+\delta}}\Big(\log \frac{\mu_f(r)}{1-r}\Big)^{\frac{1}{2}+\delta}, \ r\notin E;
\]
here, $E$ is said to be of finite logarithmic measure if $\int_{E} \tfrac{1}{1-r}\intd r<\infty$.

This was recently improved by Skaskiv and Kuryliak \cite{SkaskivKuryliak2020} who essentially showed that one even has that 
\begin{equation}\label{eq-wmD2}
M_f(r)\leq C \frac{\mu_f(r)}{1-r}\Big(\log \frac{1}{1-r}\Big)^{\frac{1}{2}+\delta}\Big(\log \frac{\mu_f(r)}{1-r}\Big)^{\frac{1}{2}}\Big(\log \log\frac{\mu_f(r)}{1-r}\Big)^{1+\delta}, \ r\notin E;
\end{equation}
see Theorem \ref{t-SkaskivKuryliak} below for more details.

\subsection{Probabilistic Wiman-Valiron theory in the disk} 
Kuryliak, Skaskiv and Skaskiv have shown that variants of the Lévy phenomenon also hold in the unit disk, see \cite[Theorem 2.3]{KuryliakSkaskivSkaskiv2020}, \cite[Theorem 2, Corollary 2]{KuryliakSkaskiv2022}, and \cite[Theorem 1, Corollary 1]{KuryliakSkaskiv2023}. 
The first two results concern centred bounded random variables. In \cite{KuryliakSkaskiv2023}, centred subgaussian random variables are considered, however under an additional assumption (see Remark \ref{r-KS23}). In our second main result, using a different proof, we obtain an improved inequality, and we show that the additional assumption in \cite{KuryliakSkaskiv2023} can be dropped. 

\begin{theorem}\label{t-main2}
Let $f(z)=\sum_{n= 0}^\infty a_{n}z^n$ be a non-constant analytic function in $\ID$ and $(X_n)_{n\geq 0}$ an independent centred subgaussian sequence. Then $\sum_{n= 0}^\infty a_{n}X_{n}z^n$ defines almost surely an analytic function in $\ID$. Moreover, for every $\delta>0$, there exists a set $E\subset\intervalco{0}{1}$ of finite logarithmic measure and a constant $C>0$ such that, almost surely, there exists $r_0 \in (0,1)$ such that
\[
\max_{|z|=r} \Big|\sum_{n=0}^{\infty}a_{n}X_{n}z^n\Big| \leq C \frac{\mu_f(r)}{(1-r)^{\frac{1}{2}}}\Big(\log\frac{1}{1-r}\Big)^{\frac{3}{4}+\delta}\Big(\log\frac{\mu_f(r)}{1-r}\Big)^{\frac{1}{4}}\Big(\log\log\frac{\mu_f(r)}{1-r}\Big)^{1+\delta}
\]
for $r_0\leq r<1$, $r\notin E$.
\end{theorem}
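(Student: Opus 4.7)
The almost sure analyticity of $F(z):=\sum_na_nX_nz^n$ in $\ID$ follows immediately from the subgaussian tail $\proba(|X_n|>n)\le 2e^{-cn^2}$ and Borel–Cantelli, which give $|X_n|\le n$ eventually a.s., so $\limsup_n|a_nX_n|^{1/n}=\limsup_n|a_n|^{1/n}\le 1$. For the main inequality, I plan to follow the Salem–Zygmund/Kahane strategy that also underlies Theorem \ref{t-main1}, replacing the entire-function Wiman–Valiron inputs by their disk analogues — specifically by the deterministic Skaskiv–Kuryliak inequality \eqref{eq-wmD2}. The broad plan is to establish the bound on a discrete sequence of radii $r_k\uparrow 1$ chosen so that $\mu_f(r_k)/(1-r_k)$ grows geometrically (making the failure probabilities summable), apply Borel–Cantelli, and then interpolate to all $r\in[r_0,1)\setminus E$ by monotonicity of $M_F$ together with the near-constancy of $\mu_f$ on each $[r_k,r_{k+1}]$ outside a further exceptional set of finite logarithmic measure.

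At a fixed $r$, the random variable $F(re^{i\theta})=\sum_na_nX_nr^ne^{in\theta}$ is a centred complex subgaussian sum with variance proxy $V(r):=\sum_n|a_n|^2r^{2n}$, and the standard tail bound gives
\[
\proba\Big(|F(re^{i\theta})|>t\Big)\le C\exp\!\Big(-c\,t^2/V(r)\Big).
\]
To promote this pointwise estimate to a bound on $\max_{|z|=r}|F(z)|$, I would restrict attention — for good radii — to the central window $I_r$ of indices on which $|a_n|r^n$ is within a bounded factor of $\mu_f(r)$, the tail contribution being negligible by the usual central-index convexity argument of the disk Wiman–Valiron theory; factor out the monomial $z^{\nu_f(r)}$; and compare the supremum of the resulting Laurent polynomial of degree $\Delta_r:=|I_r|$ on the circle with its maximum on $\asymp\Delta_r$ equispaced points via a Bernstein-type inequality. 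A union bound over these points with the choice $t\asymp\sqrt{V(r)\log\Delta_r}\,(\log\log\Delta_r)^{1/2+\delta}$ then produces, with probability summable along $(r_k)$,
\[
\max_{|z|=r}|F(z)|\le C\sqrt{V(r)\log\Delta_r}\,(\log\log\Delta_r)^{1/2+\delta}.
\]

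Finally, the central-window analysis of the disk Wiman–Valiron theory shows that, for $r$ outside a set of finite logarithmic measure, the window width satisfies $\Delta_r\asymp(1-r)^{-1}(\log(1/(1-r)))^{1/2+\delta}(\log(\mu_f(r)/(1-r)))^{1/2}(\log\log(\mu_f(r)/(1-r)))^{1+\delta}$ (the exponents being inherited from \eqref{eq-wmD2}), and the $\ell^2$ mass is concentrated on $I_r$, whence
\[
V(r)\le C\,\mu_f(r)^2\Delta_r\le C\,\frac{\mu_f(r)^2}{1-r}\Big(\log\tfrac{1}{1-r}\Big)^{1/2+\delta}\Big(\log\tfrac{\mu_f(r)}{1-r}\Big)^{1/2}\Big(\log\log\tfrac{\mu_f(r)}{1-r}\Big)^{1+\delta}.
\]
Taking square roots, multiplying by $\sqrt{\log\Delta_r}(\log\log\Delta_r)^{1/2+\delta}$ with $\log\Delta_r\asymp\log(1/(1-r))$, and collecting exponents reproduces precisely the right-hand side of Theorem \ref{t-main2} after rescaling $\delta$. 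The principal difficulty is not any single one of the steps above, but their coordination: the central-window approximation, the deterministic Skaskiv–Kuryliak input, the discrete sequence $(r_k)$, and the final interpolation each come with an exceptional set of finite logarithmic measure, and these have to be combined into a single $E$ on whose complement the bound holds almost surely with the sharp exponents appearing in the statement. The subgaussian concentration, Bernstein discretization and Borel–Cantelli steps are then standard and parallel the proof of Theorem \ref{t-main1}.
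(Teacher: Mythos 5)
Your overall architecture (a Salem--Zygmund/Kahane sup-estimate on a net of radii, Borel--Cantelli, monotonicity of the maximum modulus, with the Skaskiv--Kuryliak inequality \eqref{eq-wmD2} as the deterministic input) is indeed the one the paper follows: it first proves an $S_f$-version (Theorem \ref{rogHDEThmMain}) and then deduces Theorem \ref{t-main2} exactly via your bookkeeping $S_f^2\le\mu_f G_f\lesssim$ the bound of Lemma \ref{l-skkurg}. But there is a genuine gap in your treatment of the terms outside the ``central window''. You define $I_r$ as the set of indices with $|a_n|r^n$ within a bounded factor of $\mu_f(r)$ and claim the complementary tail is negligible ``by the usual central-index convexity argument''. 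That is false for this window: for $f(z)=\frac{1}{1-z}$ one has $\mu_f(r)=1$, while $\sum_{n\notin I_r}|a_n|r^n\asymp\frac{1}{1-r}$, which is far larger than the target $\frac{1}{\sqrt{1-r}}$, so the tail cannot be discarded deterministically --- the random cancellation is needed there as well. Moreover, since the $X_n$ are unbounded, even a correct deterministic bound on $\sum_{n\notin I_r}|a_n|r^n$ does not control $\sum_{n\notin I_r}|a_nX_n|r^n$ without a probabilistic event. The paper resolves precisely this point in Lemma \ref{rogHDELemmaprobaTail}: it cuts at degree $N^\alpha$ with $N\ge\frac{1}{(1-r)^2}\big(\log\frac{\mu_f(r)}{1-r}\big)^{2+\delta}$ (much larger than your window), controls the far tail on the event $\{|X_n|\le n^{1-1/\alpha}\ \text{for all}\ n>N^\alpha\}$ using the derivative-type estimate $\sum_n n|a_n|r^n\le\frac{1}{1-r}G_f(r)\big(\log G_f(r)\big)^{1+\delta/2}$ of Lemma \ref{rogHCELemmaseriesEstimate} together with Lemma \ref{l-skkurg}, and applies Kahane's Lemma \ref{rogHCELemmasupTrigoPoly} to the initial polynomial of degree $\lfloor N^\alpha\rfloor$; this is the step your ``Bernstein discretization'' would have to replace, and it cannot be run on the narrow window alone.

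A second, related gap is in the quantities you attach to the window and in the discretization. The relation $\Delta_r\asymp$ (the expression inherited from \eqref{eq-wmD2}) is only an upper bound --- the set of near-maximal terms can consist of a single index --- and $\log\Delta_r\asymp\log\frac{1}{1-r}$ fails when $\mu_f$ grows fast (then $\log\log\frac{\mu_f(r)}{1-r}$ dominates). More seriously, the summability in Borel--Cantelli along your single-parameter net (geometric growth of $\frac{\mu_f(r_k)}{1-r_k}$) requires the exponent in the union bound to grow with $k$, which the possibly small window width does not guarantee; the threshold must be built from an explicit $N$ depending on $\frac{1}{1-r}$ \emph{and} $\log\frac{\mu_f(r)}{1-r}$, as above. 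Because two independent growth scales ($\frac{1}{1-r}$ and $\mu_f$) are involved, the paper also needs the three-parameter subsequence Lemma \ref{rogHDELemmasubseq} (indexing $\log\frac{1}{1-r}$, $\log\mu_f$, $\log S_f$, with the net chosen inside the complement of the exceptional set) both to get $\sum\proba(A_{l,k,j})<\infty$ and to pass from the net to all $r\notin E$; your ``near-constancy of $\mu_f$'' interpolation ignores the constraint that the chosen radii must avoid $E$ and that the comparison must be uniform in both scales. So the skeleton is right, but the two steps you label as standard are exactly where the work lies, and as stated they do not go through.
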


Kuryliak and Skaskiv seem to conjecture in \cite[p.\ 755]{KuryliakSkaskiv2022} that the optimal exponent of $\log\frac{1}{1-r}$ should be $\frac{1}{4}+\delta$, at least for certain random variables.

\subsection{Notation and organisation} 
We will use $a\lesssim b$ to indicate an inequality up to a multiplicative constant that is independent of the running variable in the expressions $a$ and $b$. The notation $a\asymp b$ means $a\lesssim b$ and $b\lesssim a$.

In inequalities like \eqref{eq-wm} one can omit the constant $C$, which is often done in the literature. For the sake of coherence we prefer to state all results with a constant $C$.

The paper is organised as follows. We start with some results that do not distinguish between the plane and the disk. We then deduce Lévy phenomena in the plane, including Theorem \ref{t-main1} (Section \ref{s-LevyC}), and in the disk, including Theorem \ref{t-main2} (Section \ref{s-LevyD}). Section \ref{s-unif} unifies the previous results. Section \ref{s-lindyn} presents applications to linear dynamics.

\begin{remark}
In the probabilistic Wiman-Valiron results of Sections \ref{s-intro} to \ref{s-unif} we demand for brevity that the sequence $(X_n)_n$ of complex random variables is independent. Of course, it would in each case suffice to demand that both the real parts $(\text{Re}\, X_n)_n$ and the imaginary parts $(\text{Im}\, X_n)_n$ are independent, as is done in the work of Kuryliak and Skaskiv. To see this, it suffices to apply the results to theses two sequences separately.
\end{remark}

\section{Some preliminary results}\label{s-prelim}
We introduce briefly the objects of our study. Throughout, all (real or complex) random variables are defined on a probability space $(\Omega,\mathcal{A}, \mathbb{P})$. 

\begin{definition}\label{definitionSubgaussian}
A random variable $X$ is \textit{subgaussian} if there exist constants $K,\tau > 0$ such that $\mathbb{P}(|X| > t) \leq Ke^{-t^2/\tau^2}$ for all $t \geq 0$. 

A sequence $(X_n)_{n\geq 0}$ of random variables is \emph{subgaussian} if each $X_n$, $n\geq 0$, is subgaussian with the same constants $K$ and $\tau$.
\end{definition}

Note that a real random variable $X$ is centred subgaussian if and only if there is some $\sigma>0$ such that, for all $\lambda\in \IR$,
\begin{equation}\label{eq-kah}
\mathbb{E}(e^{\lambda X}) \leq e^{\sigma^2\lambda^2},
\end{equation}
see \cite[pp.\ 4-6]{Kahane1960} and \cite[Exercice 10, p.\ 82]{Kahane1985}; see also \cite{Agneessens2023b}. Thus a centred complex subgaussian random variable is one for which the real and imaginary parts satisfy \eqref{eq-kah}.

Subgaussian random variables have been considered by Kahane \cite{Kahane1960}, \cite{Kahane1985}, who takes \eqref{eq-kah} as definition and therefore demands that they are centred. Of course, any Gaussian random variable and any bounded random variable is subgaussian.

We call the sequence $(X_n)_{n\geq 0}$ centred if each $X_n$ is centred. The following result on centred subgaussian sequences is crucial for our work; it is proved in Kahane \cite[Chapter 6, Theorem 2]{Kahane1985}. For any complex trigonometric polynomial $q(t) = \sum_{n=-N}^N a_n e^{int}$ we write $\|q\|_\infty = \max_{t\in \intervalcc{0}{2\pi}}|q(t)|$. 

\begin{lemma}[Kahane]\label{rogHCELemmasupTrigoPoly}
Let $(X_n)_{n\geq 0}$ be an independent centred subgaussian sequence. Then there exists a constant $C>0$ such that, for any positive integers $M,N\geq 1$, and any sequence $(q_n)_{n=0}^{M}$ of complex trigonometric polynomials of degree less than or equal to $N$,
\[
\mathbb{P}\Bigg(
\bigg\|\sum_{n=0}^{M} X_{n}q_{n}\bigg\|_\infty\geq  C\sqrt{\log N}\Big(\sum_{n=0}^{M}\|q_{n}\|_{\infty}^2\Big)^\frac{1}{2}
\Bigg)
\leq \frac{C}{N^2}.
\]
\end{lemma}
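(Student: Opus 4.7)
My plan is the standard two-step strategy for supremum bounds on random trigonometric polynomials. Step one: prove a pointwise subgaussian tail bound for $S(t) := \sum_{n=0}^M X_n q_n(t)$ of the shape $\mathbb{P}(|S(t)|\geq \lambda) \leq 4 e^{-c\lambda^2/V}$ with variance proxy $V := \sum_{n=0}^M \|q_n\|_\infty^2$. Step two: use the fact that $S$ is itself a trigonometric polynomial of degree $\leq N$ to reduce $\|S\|_\infty$ to a maximum over $O(N)$ discrete points, and then apply a union bound to recover the right-hand side of the claim.

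For the pointwise bound, the Chernoff method works provided we can control $\mathbb{E}(e^{\theta \operatorname{Re} S(t)})$. Because the sequence $(X_n)_n$ is independent but the real and imaginary parts of a single $X_n$ need not be (see Definition \ref{definitionSubgaussian} and the surrounding remarks), the moment generating function factorises over $n$ while each factor involves both $X_n' := \operatorname{Re} X_n$ and $X_n'' := \operatorname{Im} X_n$. I would handle this with a one-line Cauchy--Schwarz step to decouple $X_n'$ and $X_n''$ inside each factor, after which \eqref{eq-kah} gives $\mathbb{E}\bigl(e^{\theta(X_n' u_n(t) - X_n'' v_n(t))}\bigr) \leq \exp(2\sigma^2\theta^2 |q_n(t)|^2)$, where $q_n = u_n + i v_n$. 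Multiplying over $n$, bounding $|q_n(t)|\leq \|q_n\|_\infty$, optimising in $\theta$, and treating $\operatorname{Im} S(t)$ symmetrically then produces the claimed subgaussian tail.

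The discretisation step rests on Bernstein's inequality $\|S'\|_\infty \leq N\|S\|_\infty$, valid since $S$ is a trigonometric polynomial of degree $\leq N$. Sampling at $\lesssim N$ equally spaced points with spacing $\leq 1/(2N)$ and invoking the mean value theorem produces $\|S\|_\infty \leq 2\max_k |S(t_k)|$. A union bound over these points, with $\lambda \asymp \sqrt{\log N}\,\sqrt{V}$ chosen so that each pointwise exceedance probability is at most $N^{-3}$, then gives the claimed $\lesssim N^{-2}$ estimate, and all absolute constants are bundled into a single $C$ serving the double role it plays in the statement.

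The overall scheme is classical, so the main delicate point is the complex-valued setting: \eqref{eq-kah} is stated for \emph{real} centred subgaussian variables, and since the pair $(\operatorname{Re} X_n, \operatorname{Im} X_n)$ is not assumed independent, the Cauchy--Schwarz decoupling is indispensable; without it one would need a joint subgaussian hypothesis that is not available here. Beyond that, only routine bookkeeping of constants is required.
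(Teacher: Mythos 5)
Your argument is correct, but note that the paper does not prove this lemma at all: it is quoted verbatim as Kahane's theorem and justified only by the citation \cite[Chapter 6, Theorem 2]{Kahane1985}, so you are supplying a proof where the authors rely on the literature. What you propose is the classical Salem--Zygmund scheme, and it goes through: the Cauchy--Schwarz decoupling of $\operatorname{Re}X_n$ and $\operatorname{Im}X_n$ inside each factor is exactly the right fix for the fact that \eqref{eq-kah} is only available for the real and imaginary parts separately (with $\sigma$ uniform in $n$ because the sequence is subgaussian with common constants $K,\tau$), and the Chernoff bound then gives $\mathbb{P}(|S(t)|\geq\lambda)\leq 4e^{-c\lambda^2/V}$ with $V=\sum_n\|q_n\|_\infty^2$. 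Your discretisation via Bernstein's inequality ($S$ has degree $\leq N$, so $\|S'\|_\infty\leq N\|S\|_\infty$, and a grid of mesh $1/(2N)$ yields $\|S\|_\infty\leq 2\max_k|S(t_k)|$) followed by a union bound over $O(N)$ points with $\lambda\asymp\sqrt{\log N}\sqrt{V}$ indeed produces the bound $\lesssim N^{-2}$; Kahane's own proof uses the same subgaussian moment-generating estimate but replaces the net by an integral localization argument (a degree-$N$ polynomial is comparable to its maximum on an arc of length $\asymp 1/N$), so your route is marginally more elementary at no real cost. Two small points worth making explicit in a written version: for complex-valued $S$ use the integral form $|S(t)-S(t_k)|\leq|t-t_k|\,\|S'\|_\infty$ rather than the mean value theorem, and for $N=1$ (where $\log N=0$) the asserted inequality is trivially true once $C\geq 1$, so the union-bound computation only needs to be run for $N\geq 2$.
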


Note that the constant $C$ may only depend on the constants $K,\tau>0$ of the subgaussian sequence.

We now consider functions $f(z)=\sum_{n=0}^\infty a_n z^n$ that are analytic for $|z|<R$, $0<R\leq \infty$. Let $(X_n)_{n\geq }$ be a sequence of complex random variables. If they are uniformly bounded, then, for every $\omega\in \Omega$, $\sum_{n=0}^\infty a_n X_n(\omega) z^n$ also has radius of convergence at least $R$. This is also almost surely true if $(X_n)_{n\geq 0}$ is subgaussian, as we will now see.

\begin{proposition}\label{rogHCELemmaWellDefined}
Let $0<R\leq \infty$, and let $f(z)=\sum_{n=0}^\infty a_nz^n$ be analytic for $|z|<R$. Let $(X_n)_{n\geq 0}$ be a subgaussian sequence. Then the random series $\sum_{n=0}^\infty a_nX_nz^n$ has almost surely radius of convergence at least $R$.
\end{proposition}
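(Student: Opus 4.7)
The plan is to use the Cauchy--Hadamard formula: since $f$ has radius of convergence at least $R$, we have $\limsup_{n\to\infty}|a_n|^{1/n}\leq 1/R$, and it suffices to show that almost surely
\[
\limsup_{n\to\infty}|a_nX_n|^{1/n}\leq \tfrac{1}{R},
\]
which will follow at once from the almost sure bound $\limsup_{n\to\infty}|X_n|^{1/n}\leq 1$.

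To establish the latter I would invoke the Borel--Cantelli lemma. Since $(X_n)_{n\geq 0}$ is subgaussian with uniform constants $K,\tau>0$, for any fixed $\varepsilon>0$ and any $n\geq 1$,
\[
\mathbb{P}\bigl(|X_n|^{1/n}>1+\varepsilon\bigr)=\mathbb{P}\bigl(|X_n|>(1+\varepsilon)^n\bigr)\leq K\exp\!\bigl(-(1+\varepsilon)^{2n}/\tau^2\bigr).
\]
The right-hand side is summable in $n$, so $\mathbb{P}(|X_n|^{1/n}>1+\varepsilon \text{ i.o.})=0$, i.e.\ almost surely $|X_n|^{1/n}\leq 1+\varepsilon$ for all but finitely many $n$. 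Intersecting the corresponding full-measure events over a sequence $\varepsilon_k\downarrow 0$ (say $\varepsilon_k=1/k$) yields $\limsup_{n\to\infty}|X_n|^{1/n}\leq 1$ almost surely.

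Combining this with $\limsup_{n\to\infty}|a_n|^{1/n}\leq 1/R$ gives $\limsup_{n\to\infty}|a_nX_n|^{1/n}\leq 1/R$ almost surely, so that on the corresponding event of full measure the series $\sum_{n=0}^{\infty}a_nX_nz^n$ converges for $|z|<R$, proving the claim. There is no real obstacle here: independence is not needed, only the uniformity of the subgaussian constants, which ensures the Borel--Cantelli sum is controlled independently of $n$. The only thing to be slightly careful about is taking the countable intersection of events to let $\varepsilon\to 0$, which is standard.
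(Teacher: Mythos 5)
Your proof is correct and follows essentially the same route as the paper: the first Borel--Cantelli lemma applied to the uniform subgaussian tail bound yields an almost sure subexponential bound on $|X_n|$, which leaves the radius of convergence unchanged. The only cosmetic difference is the threshold: the paper bounds $|X_n|\leq c\sqrt{\log n}$ (with $c>\tau$) eventually, which gives $\limsup_{n}|X_n|^{1/n}\leq 1$ in one step, whereas you use $(1+\varepsilon)^n$ and then intersect over $\varepsilon_k\downarrow 0$ — both are fine, and independence is indeed not needed.
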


\begin{proof}
By assumption we have that $\limsup_{n\to\infty}|a_n|^{1/n}\leq 1/R$. Thus, for every $0<r<R$, there exist $0<\rho<1$ and $n_0\geq 1$ such that, for every $n\geq n_0$, $\sqrt{\log n}\ |a_n|r^n\leq \rho^n$. This implies that $\sum_{n=1}^\infty\sqrt{\log n}\ a_nz^n$ has radius of convergence at least $R$. 

It suffices to prove the claim for real subgaussian sequences $(X_n)_{n\geq 0}$. Fix $c>0$, and let $K>0$ and $\tau>0$ be constants associated to this sequence. Then we have that
\[
\sum_{n=1}^\infty \mathbb{P}\big(|X_n|> c\sqrt{\log n}\big) \leq K\sum_{n=1}^\infty e^{-c^2(\log n)/\tau^2} =\sum_{n=1}^\infty\frac{K}{n^{c^2/\tau^2}}.
\]
If $c^2>\tau^2$ then $\sum_{n=1}^\infty\mathbb{P}(|X_n| > c\sqrt{\log n})$ converges. It follows from the Borel-Cantelli lemma that, almost surely, $|X_n|\leq c\sqrt{\log n}$ for $n$ large enough. Therefore also $\sum_{n=0}^\infty a_nX_nz^n$ has almost surely radius of convergence  at least $R$.
\end{proof}

In particular, for any subgaussian sequence $(X_n)_{n\geq 0}$ and any function $f(z)=\sum_{n=0}^\infty a_nz^n$ that is analytic in $\IC$ or in $\ID$, the (formal) series $\sum_{n=0}^\infty a_n X_n z^n$ is almost surely well-defined and analytic in $\IC$ or $\ID$, respectively. This proves the first assertions in Theorems \ref{t-main1} and \ref{t-main2}.

We now prepare the proofs of the main parts of these theorems by some lemmas that do not depend on the radius of convergence.
The first two lemmas are inspired by \cite[Lemma 6.15]{Hayman1989}, \cite[Lemma 1]{KuryliakSkaskivShapovalovska2016} and \cite[Lemma 3.2 and p.\ 143]{KuryliakSkaskivSkaskiv2020}.

\begin{lemma}\label{rogHCELemmaBoundG}
Let $0< R\leq\infty$. Let $g:\intervalco{0}{R}\to\intervalco{0}{\infty}$ be a continuously differentiable increasing function with $\lim_{r\to R}g(r)>1$ and $h:\intervalco{\rho}{R}\to\intervalco{0}{\infty}$ be a continuous increasing function with $\int_{\rho}^R\frac{h(r)}{r}\emph{\intd} r=\infty$ for some $\rho\in \intervalco{0}{R}$. Then, for every $\delta>0$, there exists an open set $E\subset\intervalco{0}{\infty}$ with $\int_{E\cap\intervalco{\rho}{R}}\frac{h(r)}{r}\emph{\intd} r<\infty$ such that, for every $r> \rho$, $r\notin E$, 
\[
\frac{\emph{d}}{\emph{d} r}\log g(r)\leq\frac{h(r)}{r}\big(\log g(r)\big)^{1+\delta}.
\]
\end{lemma}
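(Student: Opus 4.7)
The plan is to establish this as a classical Borel-type lemma: I would explicitly define the exceptional set $E$ as (essentially) the set where the desired inequality fails, and bound its $\frac{h(r)}{r}\,\intd r$-measure by exhibiting a one-line antiderivative. Since $g$ is increasing with $\lim_{r\to R}g(r)>1$, I first pick some $r_1\in(\rho,R)$ with $g(r_1)>1$; on the interval $(r_1,R)$ one then has $\log g(r)>0$, so $\bigl(\log g(r)\bigr)^{1+\delta}$ is well defined and positive.

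The key computation is the identity
\[
\frac{\intd}{\intd r}\!\left[-\frac{1}{\delta\bigl(\log g(r)\bigr)^{\delta}}\right] = \frac{(\log g)'(r)}{\bigl(\log g(r)\bigr)^{1+\delta}},
\]
valid on $(r_1,R)$, whose right-hand side is non-negative since $g$ is increasing. I would then introduce
\[
E_0 = \left\{r\in(r_1,R):\ \frac{\intd}{\intd r}\log g(r) > \frac{h(r)}{r}\bigl(\log g(r)\bigr)^{1+\delta}\right\},
\]
which is open by continuity of $g$, $g'$ and $h$. On $E_0$ the defining strict inequality gives $\frac{h(r)}{r}<(\log g)'(r)/\bigl(\log g(r)\bigr)^{1+\delta}$. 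Integrating this pointwise and then enlarging the domain of integration to all of $(r_1,R)$ (legitimate because the majorant is non-negative) makes the right-hand side telescope, via the fundamental theorem of calculus, to at most $\frac{1}{\delta(\log g(r_1))^{\delta}}<\infty$.

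To finish, I would set $E=(\rho,r_2)\cup E_0$ for some $r_2\in(r_1,R)$. This set is open, and the initial segment contributes a finite amount to $\int_{E\cap[\rho,R)}\frac{h(r)}{r}\,\intd r$ by continuity of $h$ on the compact interval $[\rho,r_2]$ (with a harmless adjustment of $\rho$ if necessary, the hypothesis $\int_\rho^R h(r)/r\,\intd r=\infty$ being used only to guarantee that the conclusion is non-vacuous). For any $r>\rho$ with $r\notin E$, we have $r\geq r_2>r_1$ and $r\notin E_0$, so the claimed inequality holds at $r$. I do not anticipate a genuine obstacle: the whole argument reduces to a single differentiation, and the only bookkeeping points are ensuring $\log g(r)>0$ on the working interval (handled by the choice of $r_1$) and that $E$ remains open (immediate from continuity).
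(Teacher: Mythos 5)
Your argument is correct and essentially the same as the paper's: both take $E$ to be the (open, by continuity) set where the inequality fails and bound $\int_E \frac{h(r)}{r}\,\intd r$ by $\int \frac{(\log g)'(r)}{(\log g(r))^{1+\delta}}\,\intd r$, which you evaluate with the explicit antiderivative $-\tfrac{1}{\delta}(\log g)^{-\delta}$ while the paper performs the substitution $x=\log g(r)$ — the identical computation. Your treatment of small $r$ (absorbing an initial interval into $E$, adjusting $\rho$ if necessary) matches the paper's ``we can assume $\rho$ so large that $g(r)\ge\eta>1$''.
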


\begin{proof}
By assumption on $g$ we can assume $\rho$ so large that $g(r)\geq \eta$ for some $\eta>1$ and all $r\geq \rho$. Let $E\subset\intervaloo{\rho}{R}$ be the set where the inequality of the lemma does not hold. Since both sides of the inequality are continuous, the set $E$ is open. Using the change of variables $x=\log g(r)$ (note that $\log g$ need not be strictly increasing, see \cite[p.\ 156]{Rudin87}) we obtain that
\[
\int_{E}\frac{h(r)}{r}\intd r\leq\int_{E}\frac{\frac{\text{d}}{\text{d} r}\log g(r)}{\big(\log g(r)\big)^{1+\delta}}\intd r \leq\int_{\log\eta}^{\infty}\frac{1}{x^{1+\delta}}\intd x<\infty,
\]
which gives the desired restriction for $E$.
\end{proof}

\begin{lemma}\label{rogHCELemmaseriesEstimate}
Let $0<R\leq \infty$. Let the real power series $g(r)=\sum_{n=0}^\infty a_nr^n$, with $a_n\geq 0$ for all $n\geq 0$, have radius of convergence at least $R$, and suppose that $\lim_{r\to R}g(r)>1$. Let $h:\intervalco{\rho}{R}\to\intervalco{0}{\infty}$ be a continuous increasing function with $\int_{\rho}^R\frac{h(r)}{r}\emph{\intd} r=\infty$ for some $\rho\in \intervalco{0}{R}$. Then, for every $\delta>0$, there exists an open set $E\subset\intervalco{0}{R}$ with $\int_{E\cap\intervalco{\rho}{R}}\frac{h(r)}{r}\emph{\intd} r<\infty$ such that, for every $r> \rho$, $r\notin E$,
\[
\sum_{n=0}^{\infty}na_nr^n \leq h(r)g(r)\big(\log g(r)\big)^{1+\delta}.
\]
\end{lemma}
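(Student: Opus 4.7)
The approach is to reduce this lemma to Lemma \ref{rogHCELemmaBoundG} applied to the function $g$ itself. The key identity is that, since term-by-term differentiation is valid on $\intervalco{0}{R}$,
\[
\sum_{n=0}^\infty n a_n r^n = r \sum_{n=1}^\infty n a_n r^{n-1} = r g'(r).
\]
Dividing through by $g(r)$, the desired estimate is equivalent to
\[
\frac{g'(r)}{g(r)} \leq \frac{h(r)}{r}\big(\log g(r)\big)^{1+\delta},
\]
that is, $\frac{\text{d}}{\text{d}r}\log g(r) \leq \frac{h(r)}{r}\big(\log g(r)\big)^{1+\delta}$, which is exactly the conclusion of Lemma \ref{rogHCELemmaBoundG}.

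Accordingly, I would simply verify that $g$ satisfies the hypotheses of that lemma and invoke it. Since the coefficients $a_n$ are nonnegative and the radius of convergence is at least $R$, $g$ is analytic (hence continuously differentiable) on $\intervalco{0}{R}$, takes values in $\intervalco{0}{\infty}$, and has nonnegative derivative $g'(r)=\sum_{n\geq 1} n a_n r^{n-1}$, so it is increasing. The condition $\lim_{r\to R} g(r) > 1$ is identical in both statements, and the function $h$ satisfies the same hypotheses. Applying Lemma \ref{rogHCELemmaBoundG} yields an open set $E\subset\intervalco{0}{\infty}$ with $\int_{E\cap\intervalco{\rho}{R}}\frac{h(r)}{r}\intd r < \infty$, and multiplying the resulting inequality by $r g(r) > 0$ gives the claim for $r>\rho$, $r\notin E$. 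To get $E\subset \intervalco{0}{R}$ one simply intersects with $\intervalco{0}{R}$, which does not increase the integral.

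The main obstacle: there is essentially none, as this is a direct corollary of Lemma \ref{rogHCELemmaBoundG} via the identity $\sum_n n a_n r^n = r g'(r)$. The only subtle point is that $\log g(r)$ must be well-defined and positive for the estimate to make sense; this is arranged inside the proof of Lemma \ref{rogHCELemmaBoundG}, which uses the limit hypothesis to enlarge $\rho$ so that $g(r) \geq \eta > 1$ for all $r \geq \rho$, ensuring $\log g(r) > 0$ throughout the range where the inequality is claimed.
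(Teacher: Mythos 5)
Your proof is correct and is essentially the paper's own argument: the paper likewise writes $\sum_{n=0}^{\infty}na_nr^{n}=rg'(r)=rg(r)\frac{\mathrm{d}}{\mathrm{d}r}\log g(r)$ and then invokes Lemma \ref{rogHCELemmaBoundG}. Your additional remarks on verifying the hypotheses and on positivity of $\log g(r)$ match how the paper handles these points inside Lemma \ref{rogHCELemmaBoundG}.
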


\begin{proof}
First notice that, for every $r\in \intervaloo{0}{R}$, one has $\frac{\text{d}}{\text{d} r} g(r)=r^{-1}\sum_{n= 0}^\infty na_nr^{n}$
and thus
\[
\sum_{n=0}^{\infty}na_nr^{n}=r\frac{\text{d}}{\text{d} r}g(r)=rg(r)\frac{\text{d}}{\text{d} r}\log g(r)
\]
for $r$ sufficiently large. Then the result follows from Lemma \ref{rogHCELemmaBoundG}.
\end{proof}

\section{Lévy's phenomenon in the plane}\label{s-LevyC}

We first study the rate of growth for random power series on $\IC$ with subgaussian coefficients. Our aim here is to give a proof of Theorem \ref{t-main1}.  

The main ideas in this section come from Erd\H{o}s and R\'{e}nyi \cite{ErdosRenyi1969}, Steele \cite{Steele1987}, Kuryliak \cite{Kuryliak2017}, and Kuryliak, Skaskiv and Skaskiv \cite{KuryliakSkaskivSkaskiv2020}.

Let $f(z)=\sum_{n=0}^\infty a_nz^n$ be an entire function. We have already defined its maximum term $\mu_f(r)$, $r\geq 0$. In addition, we will need the expressions
\[
S_f(r)=\Big(\sum_{n= 0}^\infty|a_n|^2r^{2n}\Big)^{\frac{1}{2}} \text{ and } G_f(r)=\sum_{n=0}^{\infty}|a_n|r^{n}.
\]
Note that $\mu_f(r)\leq S_f(r)\leq G_f(r)$. It might also be of interest that, by Parseval's identity,
\[
S_f(r) = \Big(\frac{1}{2\pi}\int_0^{2\pi}|f(re^{it})|^2 \intd t\Big)^{\frac{1}{2}},
\]
which is also denoted as $M_2(f,r)$. In the present context, the function $S_f(r)$ has already appeared in Erd\H{o}s and R\'{e}nyi \cite{ErdosRenyi1969} and Steele \cite{Steele1987}. 

All three functions are continuous in $r$ (see \cite[Satz 4.2]{JankVolkmann1985} for $\mu_f$), and if $f$ is non-constant then they tend to infinity as $r\to\infty$.

Recall also that a (measurable) set $E\subset\intervalco{0}{\infty}$ is of \textit{finite logarithmic measure} if $\int_{E\cap\intervalco{1}{\infty}} \frac{1}{r}\intd r<\infty$. Obviously, in order to show that some property holds outside a set of finite logarithmic measure, it suffices to prove that there exists a set of finite logarithmic measure such that the property holds outside this set and for $r$ sufficiently large. 

Applying the Rosenbloom inequality \eqref{eq-wm2} to the entire function $z\to\sum_{n=0}^\infty |a_n|z^n$ we obtain the following; note that its maximal modulus function is $G_f$ and its maximum term is $\mu_f$.

\begin{lemma}\label{l-roseng}
Let $f(z)=\sum_{n=0}^\infty a_nz^n$ be a non-constant entire function. Then, for every $\delta>0$, there is an open set $E\subset\intervalco{0}{\infty}$ of finite logarithmic measure and a constant $C>0$ such that, for any $r\notin E$,
\[
G_f(r)\leq C \mu_f(r)\big(\log \mu_f(r)\big)^{\frac{1}{2}}\big(\log\log\mu_f(r)\big)^{1+\delta}.
\]
\end{lemma}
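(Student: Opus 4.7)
The strategy is simply to reduce the statement to the Rosenbloom inequality \eqref{eq-wm2} applied to an auxiliary entire function.

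First I would introduce the companion function
\[
\widetilde f(z) = \sum_{n=0}^\infty |a_n| z^n, \quad z\in \IC.
\]
Since $(a_n)$ and $(|a_n|)$ have the same radius of convergence, $\widetilde f$ is entire, and it is non-constant because $f$ is non-constant. Because all coefficients of $\widetilde f$ are non-negative reals, the maximum modulus on $|z|=r$ is achieved at $z=r$, so
\[
M_{\widetilde f}(r) = \widetilde f(r) = \sum_{n=0}^\infty |a_n| r^n = G_f(r),
\]
while clearly
\[
\mu_{\widetilde f}(r) = \max_{n\geq 0} |a_n| r^n = \mu_f(r).
\]

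Next I would apply Rosenbloom's improvement \eqref{eq-wm2} of the classical Wiman-Valiron inequality to $\widetilde f$: for the given $\delta>0$, there is a set $E\subset [0,\infty)$ of finite logarithmic measure and a constant $C>0$ such that, for $r\notin E$,
\[
M_{\widetilde f}(r) \leq C\, \mu_{\widetilde f}(r) \bigl(\log \mu_{\widetilde f}(r)\bigr)^{\frac{1}{2}} \bigl(\log\log \mu_{\widetilde f}(r)\bigr)^{1+\delta}.
\]
Substituting the identifications above immediately yields the desired bound on $G_f(r)$.

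Finally, for the openness of $E$, I would replace $E$ by the set of all $r\geq 0$ at which the stated inequality fails; since $G_f$, $\mu_f$ and $r\mapsto \log\log \mu_f(r)$ are continuous (for $r$ large enough that $\mu_f(r)>e$), this set is automatically open, and it is contained in the original $E$ up to a bounded initial segment, hence still of finite logarithmic measure. No real obstacle is anticipated beyond confirming that Rosenbloom's original statement can indeed be invoked in the form used here; the content of the lemma is essentially just the observation that $G_f = M_{\widetilde f}$ and $\mu_f = \mu_{\widetilde f}$.
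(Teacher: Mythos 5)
Your proof is correct and matches the paper's argument: the paper obtains the lemma precisely by applying Rosenbloom's inequality \eqref{eq-wm2} to the auxiliary function $z\mapsto\sum_{n=0}^\infty |a_n|z^n$, whose maximum modulus is $G_f$ and whose maximum term is $\mu_f$, and then notes that $E$ may be taken open since both sides of the inequality are continuous. Nothing to add.
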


Here, $E$ can be chosen to be open because both sides of the inequality are continuous.

In the sequel, for ease of writing, we use the notation
\[
\|f\|_r =M_f(r) = \max_{|z|=r}|f(z)|.
\]

\begin{lemma}\label{rogHCELemmaProbaEstimate}
Let $f(z)=\sum_{n=0}^\infty a_nz^n$ be a non-constant entire function and $(X_n)_{n\geq 0}$ an independent centred subgaussian sequence. Let $\alpha>1$ and $\delta>0$. Then $\sum_{n=0}^\infty a_{n}X_{n}z^n$ defines almost surely an entire function, and there exists a constant $C>0$ and an open set $E\subset\intervalco{0}{\infty}$ of finite logarithmic measure such that, for any $r\notin E$,
\[
\mathbb{P}\Bigg(
\bigg\|\sum_{n=0}^{\infty}a_{n}X_{n}z^n\bigg\|_{r} \geq C\sqrt{\log N}S_{f}(r) \Bigg)
\leq\frac{C}{N^{2\alpha}}
\]
whenever $N\geq\big(\log\mu_{f}(r)\big)^{\frac{3}{2}+\delta}$.
\end{lemma}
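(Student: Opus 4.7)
The first assertion that $\sum_{n=0}^\infty a_n X_n z^n$ is almost surely entire is already given by Proposition \ref{rogHCELemmaWellDefined}. For the probability estimate, my plan is to split the randomized series at a well-chosen degree $M$, bounding the polynomial part via Kahane's inequality (Lemma \ref{rogHCELemmasupTrigoPoly}) and the tail via a deterministic Markov estimate built on Lemmas \ref{rogHCELemmaseriesEstimate} and \ref{l-roseng}.

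Setting $M=\lceil N^{2\alpha+1}\rceil$, I would write
\[
\sum_{n=0}^\infty a_n X_n z^n = P(z)+Q(z), \qquad P(z)=\sum_{n=0}^{M} a_n X_n z^n.
\]
Parametrising $z=re^{it}$ and setting $q_n(t)=a_n r^n e^{int}$, each $q_n$ is a monomial of degree at most $M$; Lemma \ref{rogHCELemmasupTrigoPoly} with degree parameter $M$ then gives
\[
\mathbb{P}\bigl(\|P\|_r \geq C_1\sqrt{\log M}\,S_f(r)\bigr) \leq \frac{C_1}{M^2}.
\]
Since $\sqrt{\log M}\lesssim \sqrt{\log N}$ and $M^{-2}\leq N^{-(4\alpha+2)}\leq N^{-2\alpha}$, the polynomial piece is handled with room to spare.

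For the tail $Q$, subgaussianity gives a uniform bound $\mathbb{E}|X_n|\leq C_2$, whence Markov's inequality yields
\[
\mathbb{P}\bigl(\|Q\|_r \geq \sqrt{\log N}\,S_f(r)\bigr) \leq \frac{C_2\sum_{n>M}|a_n|r^n}{\sqrt{\log N}\,S_f(r)}.
\]
To estimate the numerator, I would fix $\delta_0\in(0,\delta)$ and apply Lemma \ref{rogHCELemmaseriesEstimate} to $g=G_f$ with $h\equiv 1$ to obtain, outside an exceptional set $E_1$ of finite logarithmic measure, $\sum_{n>M}|a_n|r^n \leq G_f(r)(\log G_f(r))^{1+\delta_0}/M$. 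Combining with Lemma \ref{l-roseng} (outside a further exceptional set $E_2$) and the trivial $\mu_f(r)\leq S_f(r)$ produces
\[
\frac{\sum_{n>M}|a_n|r^n}{S_f(r)} \lesssim \frac{(\log\mu_f(r))^{3/2+\delta_0}\,(\log\log\mu_f(r))^{O(1)}}{M}.
\]

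The hypothesis $N\geq(\log\mu_f(r))^{3/2+\delta}$ together with $\delta_0<\delta$ then forces $(\log\mu_f(r))^{3/2+\delta_0}\leq N^\beta$ for some $\beta<1$, so with $M=\lceil N^{2\alpha+1}\rceil$ the right-hand side is $\ll \sqrt{\log N}/N^{2\alpha}$ once $r$ is large enough; the tail probability is therefore at most $C/N^{2\alpha}$, and a union bound on $E:=E_1\cup E_2$ concludes. The main obstacle is the simultaneous calibration of $M$, $N$, and $\delta_0$ so that the polyloglog factors and the threshold $(\log\mu_f(r))^{3/2+\delta}$ align: this precise exponent is not incidental but reflects the source of the $(\log\log\mu_f)^{1+\delta}$ factor in the eventual Lévy bound of Theorem \ref{t-main1}.
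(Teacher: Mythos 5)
Your proposal is correct, and it follows the same overall skeleton as the paper -- truncate the random series, control the truncated part by Kahane's inequality (Lemma \ref{rogHCELemmasupTrigoPoly}) applied to $q_n(t)=a_nr^ne^{int}$, and control the tail deterministically through Lemma \ref{rogHCELemmaseriesEstimate} (with $h\equiv 1$) and Lemma \ref{l-roseng}, which is exactly where the threshold $N\geq(\log\mu_f(r))^{\frac32+\delta}$ comes from -- but your treatment of the tail is genuinely different. The paper truncates at degree $N^{\alpha}$ and obtains the $N^{-2\alpha}$ decay probabilistically: it introduces the events $B_n=\{|X_n|>n^{1-1/\alpha}\}$, uses the subgaussian tail bound and a union bound to get $\mathbb{P}\big(\bigcup_{n>N^{\alpha}}B_n\big)\lesssim N^{-2\alpha}$, and on the complement bounds the tail deterministically by $N^{-1}\sum_n n|a_n|r^n\lesssim S_f(r)$; this is where the hypothesis $\alpha>1$ is really used. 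You instead truncate much later, at $M=\lceil N^{2\alpha+1}\rceil$, and use only Markov's inequality with the uniform bound on $\mathbb{E}|X_n|$ (which follows from the common subgaussian constants), letting the larger cutoff supply the factor $N^{-(2\alpha+1)}$ through $\sum_{n>M}|a_n|r^n\leq M^{-1}\sum_n n|a_n|r^n$. Your calibration does close: outside the union of the two exceptional sets and for $r$ large, $\sum_n n|a_n|r^n\lesssim \mu_f(r)(\log\mu_f(r))^{\frac32+\delta_0}(\log\log\mu_f(r))^{O(1)}\leq S_f(r)\,(\log\mu_f(r))^{\frac32+\delta}\leq S_f(r)\,N$ once the $\log\log$ factor is absorbed into the gap $\delta-\delta_0$, so the Markov bound is $\lesssim N/(M\sqrt{\log N})\lesssim N^{-2\alpha}$, and the Kahane part gives $\lesssim M^{-2}\leq N^{-2\alpha}$ with $\sqrt{\log M}\lesssim_{\alpha}\sqrt{\log N}$; the residual small-$N$ and small-$r$ cases are disposed of exactly as in the paper, by enlarging $C$ and adding a bounded interval to $E$. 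What each route buys: your tail argument is more elementary, using nothing beyond uniformly bounded first moments (subgaussianity enters only through Kahane), whereas the paper's union-bound argument keeps the truncation degree at $N^{\alpha}$ and exhibits more explicitly the role of the parameter $\alpha$; since Kahane's bound only improves with the degree, your larger cutoff costs nothing, and both arguments yield the same threshold exponent $\frac32+\delta$ and hence the same downstream consequences in Theorem \ref{rogHCEThmMain}.
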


\begin{proof}
The first assertion is given by Proposition \ref{rogHCELemmaWellDefined}. 

We next apply Lemma \ref{rogHCELemmaseriesEstimate} to the power series $G_f(r)=\sum_{n=0}^\infty |a_n|r^n$, to the function $h(r)=1$, $r\geq 0$, and to $\delta/2$; note that the exceptional set is then of finite logarithmic measure. Let $E\subset\intervalco{0}{\infty}$ be the open set of finite logarithmic measure that is the union of the open sets in that lemma and in Lemma \ref{l-roseng}, also applied for $\delta/2$. Then we have, for any $r\notin E$ sufficiently large,
\begin{equation}\label{eq-ang}
\sum_{n=0}^{\infty}n|a_n|r^n \leq G_f(r)\big(\log G_f(r)\big)^{1+\frac{\delta}{2}}
\end{equation}
and 
\begin{equation}\label{eq-wm2g}
G_f(r)\lesssim \mu_f(r)\big(\log \mu_f(r)\big)^{\frac{1}{2}}\big(\log\log\mu_f(r)\big)^{1+\frac{\delta}{2}}.
\end{equation} 

Let $\alpha>1$. We define $B_{n}:=\{|X_n|> n^{1-1/\alpha}\}\subset\Omega$ for $n\geq 1$. Since $(X_n)_n$ is subgaussian there is some $\tau>0$ such that, for $n\geq 1$,
\[
\mathbb{P}(B_n)\lesssim e^{- n^{2-2/\alpha}/\tau^2}\lesssim\frac{1}{n^3}.
\]
For any real $N\geq 1$, define $B(N):=\bigcup_{n>N^{\alpha}}B_n$. Then
\[
\mathbb{P}(B(N))\leq\sum_{n>N^{\alpha}}\mathbb{P}(B_n)\lesssim\sum_{n>N^{\alpha}}\frac{1}{n^3}\lesssim\frac{1}{N^{2\alpha}}.
\]
On the complement of $B(N)\subset \Omega$ we get for $r\geq 0$ that
\[
\Big\|\sum_{n>N^{\alpha}}a_nX_nz^n\Big\|_{r} \leq\sum_{n>N^{\alpha}}|X_n||a_n|r^n
\leq\sum_{n>N^{\alpha}}n^{1-1/\alpha}|a_n|r^n\notag\leq N^{-1}\sum_{n>N^{\alpha}}n|a_n|r^n.
\]

Now let $r_0>0$ satisfy $\mu_f(r_0)>e$. Let $r\geq r_0$, $r\notin E$, and let $N\geq\big(\log \mu_f(r)\big)^{\frac{3}{2}+\delta}>1$ be a real number. Then we have on the complement of $B(N)$, with \eqref{eq-ang} and \eqref{eq-wm2g},
\begin{align*}
\Big\|\sum_{n>N^{\alpha}}a_nX_nz^n\Big\|_{r} &\leq N^{-1}G_f(r)\big(\log G_f(r)\big)^{1+\frac{\delta}{2}}\\
&\lesssim N^{-1}\mu_f(r)\big(\log \mu_f(r)\big)^{\frac{1}{2}}\big(\log\log\mu_f(r)\big)^{1+\frac{\delta}{2}}\big(\log \mu_f(r)\big)^{1+\frac{\delta}{2}}\\
&\lesssim N^{-1}\mu_f(r)\big(\log \mu_f(r)\big)^{\frac{3}{2}+\delta} \leq \mu_f(r) \leq S_f(r).
\end{align*}
Therefore there is a constant $C_1>0$ such that, if $r\geq r_0$, $r\notin E$ and $N\geq\big(\log \mu_f(r)\big)^{\frac{3}{2}+\delta}$, then
\[
\mathbb{P}\Big(\Big\|\sum_{n>N^{\alpha}}a_nX_nz^n\Big\|_{r} >C_1S_f(r) \Big) \leq\mathbb{P}(B(N)) \lesssim\frac{1}{N^{2\alpha}}.
\]

By Lemma \ref{rogHCELemmasupTrigoPoly} applied to $q_n(t)=a_nr^ne^{int}$, $t\in[0,2\pi]$, $n\geq 0$, with $M$ and $N$ given by $\lfloor N^{\alpha} \rfloor$, we have on the other hand that there is a constant $C_2>0$ such that
\[
\mathbb{P}\Big(\Big\|\sum_{0\leq n\leq N^{\alpha}}a_nX_nz^n\Big\|_{r}\geq C_2\sqrt{\log N}S_f(r)\Big)
\lesssim\frac{1}{\lfloor N^{\alpha} \rfloor^{2}}.
\]
Altogether there is some constant $C>0$ such that
\[
\mathbb{P}\Big(\Big\|\sum_{n=0}^{\infty}a_nX_nz^n\Big\|_{r} \geq C\sqrt{\log N}S_f(r) \Big)
\lesssim\frac{1}{N^{2\alpha}}+\frac{1}{N^{2\alpha}}
\]
if $r\geq r_0$, $r\notin E$ and $N\geq\big(\log \mu_f(r)\big)^{\frac{3}{2}+\delta}$. This completes the proof.
\end{proof}

We next need a lemma, versions of which seem to appear in every proof of Lévy's phenomenon; see, for example, Erd\H{o}s-Rényi \cite[p.\ 49]{ErdosRenyi1969}, Steele \cite[p.\ 555]{Steele1987} or Kuryliak \cite[Lemma 8]{Kuryliak2017}.

\begin{lemma}\label{rogHCELemmaSubseq}
Let $\varphi: \intervalco{\rho}{\infty}\to\intervalco{1}{\infty}$ be a continuous increasing function such that\linebreak  $\lim_{r\to\infty}\varphi(r)=\infty$, where $\rho\geq 0$. Let $E\subset\intervaloo{\rho}{\infty}$ be an open set of unbounded complement.
Then there exists an infinite set $J\subset\IN$ and an increasing sequence $(r_{k})_{k\in J}$ in $[\rho,\infty)$ such that, for every $k\in J$,
\begin{enumerate}[label=\emph{(\roman*)}]
\item $r_{k}\notin E$,\label{rogHCELemmaSubseq1}
\item $\varphi(r_{k})\geq k$,\label{rogHCELemmaSubseq2}
\item for any $r\geq \rho$ with $r\notin E$ there exists $k\in J$ such that $r\leq r_{k}$ and $\varphi(r_{k})\leq \varphi(r)+1$. \label{rogHCELemmaSubseq3}
\end{enumerate}
\end{lemma}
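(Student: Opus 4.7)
The plan is to select, for each sufficiently large integer $k$, the rightmost point of the closed set $E^{c}$ at which $\varphi$ does not exceed $k+1$. Concretely, I set $U := \intervalco{\rho}{\infty}\setminus E$ (which is closed in $\intervalco{\rho}{\infty}$ and unbounded by hypothesis), and for each integer $k\geq 1$ I consider
\[
A_k := \{r \in U : \varphi(r) \leq k+1\}.
\]
When $A_k$ is non-empty it is closed, as the intersection of two closed sets, and bounded above since $\varphi(r)\to\infty$; its maximum is therefore attained. I would then set $r_k := \max A_k$ for such $k$, and take $J$ to be the set of integers $k\geq 1$ for which $A_k\neq\emptyset$ and $\varphi(r_k)\geq k$.

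Property \emph{(i)} is immediate from $r_k\in U$, and \emph{(ii)} is built into the definition of $J$. To see that $J$ is infinite I would fix an arbitrary $N\geq 1$ and use that $U$ is unbounded together with $\varphi\to\infty$ to produce $r^{*}\in U$ with $\varphi(r^{*})\geq N$; taking $k:=\lfloor\varphi(r^{*})\rfloor\geq N$ gives $r^{*}\in A_k$, so $r_k\geq r^{*}$ and $\varphi(r_k)\geq\varphi(r^{*})\geq k$ by monotonicity of $\varphi$, whence $k\in J$. Since the sets $A_k$ grow with $k$, their maxima are non-decreasing, so $(r_k)_{k\in J}$ is increasing in the non-strict sense (if strict monotonicity is desired, one may pass to a subsequence, noting that $r_k\to\infty$ because $A_k$ eventually contains arbitrarily large elements of $U$).

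For \emph{(iii)} I would fix any $r\in U$ and take $k:=\lfloor\varphi(r)\rfloor$, which is $\geq 1$ because $\varphi\geq 1$. The inequality $\varphi(r)\leq k+1$ places $r\in A_k$, yielding $A_k\neq\emptyset$, $r\leq r_k$, and $\varphi(r_k)\geq\varphi(r)\geq k$; hence $k\in J$, and $\varphi(r_k)\leq k+1\leq \varphi(r)+1$ is recorded in $r_k\in A_k$. The construction is essentially topological; the only genuine subtlety is arranging that the specific index $k$ produced for an arbitrary $r\in U$ actually lies in $J$, which is precisely why $r_k$ is chosen as the rightmost rather than any point of $A_k$: every $r\in U$ with $\varphi(r)\in[k,k+1]$ then automatically satisfies $r\leq r_k$, which by monotonicity forces $\varphi(r_k)\geq k$.
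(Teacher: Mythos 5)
Your proof is correct and follows essentially the same approach as the paper: the paper sets $r_k=\sup(U_k\setminus E)$ with $U_k=\{r\geq\rho: k\leq\varphi(r)\leq k+1\}$ and $J=\{k: U_k\setminus E\neq\varnothing\}$, and your choice $r_k=\max A_k$ with the a posteriori condition $\varphi(r_k)\geq k$ in the definition of $J$ yields exactly the same indices and the same points, since $U_k\setminus E\subset A_k$ and the monotonicity of $\varphi$ makes the two ``rightmost point'' selections coincide. The verifications of (i)--(iii), including the compactness argument and the use of monotonicity in (iii), match the paper's.
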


\begin{proof}
Define for each $k\geq 1$ the possibly empty set
\[
U_{k}:=\big\{r\geq \rho: k\leq \varphi(r)\leq k+1\big\}.
\]
These sets are closed since $\varphi$ is continuous, and bounded since $\lim_{r\to\infty}\varphi(r)=\infty$, and thus they are compact.
Define $J:=\{k\in\IN : U_{k}\setminus E\neq\varnothing\}$. For each $k\in J$, there exists $r_{k}\in U_{k}\setminus E$ such that $r_{k}=\sup (U_{k}\setminus E)$. This gives (i) and (ii). Since $\lim_{r\to\infty}\varphi(r)=\infty$ and $E$ is of unbounded complement, the set $J$ is infinite.

Let $r\geq \rho$. Since $\varphi(\rho)\geq 1$, there exists $k\in\IN$ such that $k\leq \varphi(r)\leq k+1$. If $r\notin E$ then $k\in J$, and $r\leq r_{k}$ by definition of $r_{k}$. By definition of $U_{k}$, we also have $\varphi(r_{k})\leq \varphi(r)+1$. This gives (iii).
\end{proof}

We can now prove the main result of this section, which is stronger than Theorem \ref{t-main1}. First, thanks to the Borel-Cantelli lemma, we will prove the desired inequality for a suitable sequence $(r_k)_{k\geq 1}$ chosen with Lemma \ref{rogHCELemmaSubseq}. The properties of this sequence and the Maximum Principle will then conclude the proof.

\begin{theorem}\label{rogHCEThmMain}
Let $f(z)=\sum_{n= 0}^\infty a_{n}z^n$ be a non-constant entire function and $(X_n)_{n\geq 0}$ an independent centred subgaussian sequence. Then $\sum_{n= 0}^\infty a_{n}X_{n}z^n$ defines almost surely an entire function. Moreover, there exists an open set $E\subset\intervalco{0}{\infty}$ of finite logarithmic measure and a constant $C>0$ such that, almost surely, there exists $r_0>0$ such that
\[
\max_{|z|=r} \Big|\sum_{n=0}^{\infty}a_{n}X_{n}z^n\Big| \leq C\sqrt{\log\log \mu_{f}(r)}S_f(r)
\]
for every $r\geq r_0$, $r\notin E$.
\end{theorem}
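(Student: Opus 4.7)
The plan is to combine the tail bound from Lemma \ref{rogHCELemmaProbaEstimate} with a Borel--Cantelli argument along a carefully chosen discrete sequence, then transfer the result to arbitrary $r$ via the Maximum Principle.

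First I would fix $\alpha>1$ and $\delta>0$ and let $E\subset\intervalco{0}{\infty}$ be the union of the open exceptional sets produced by Lemma \ref{rogHCELemmaProbaEstimate} and Lemma \ref{l-roseng} applied with these parameters; this $E$ is open and of finite logarithmic measure. For $r\notin E$ set $N(r):=\lceil(\log \mu_f(r))^{3/2+\delta}\rceil$, which satisfies $\sqrt{\log N(r)}\asymp\sqrt{\log\log\mu_f(r)}$, the target factor. Lemma \ref{rogHCELemmaProbaEstimate} then provides a constant $C_1$ with
\[
\mathbb{P}\Big(\Big\|\sum_{n=0}^{\infty} a_n X_n z^n\Big\|_r\geq C_1\sqrt{\log N(r)}\,S_f(r)\Big)\leq \frac{C_1}{N(r)^{2\alpha}}
\]
for all sufficiently large $r\notin E$.

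To pass from a pointwise bound to an almost sure one, I would apply Lemma \ref{rogHCELemmaSubseq} with $\varphi(r):=\log S_f(r)$ on $\intervalco{\rho}{\infty}$ for $\rho$ large enough that $\log S_f(\rho)\geq 1$; since $S_f$ is continuous, increasing and tends to $\infty$, and $E$ has unbounded complement, the lemma produces a sequence $(r_k)_{k\in J}\subset\intervalco{\rho}{\infty}\setminus E$ with $S_f(r_k)\geq e^k$ and the comparison property (iii). Using $\mu_f\leq S_f\leq G_f$ together with Lemma \ref{l-roseng} at $r_k\notin E$ one gets $\log S_f(r_k)\leq \log\mu_f(r_k)+O(\log\log\mu_f(r_k))\leq 2\log\mu_f(r_k)$ for $r_k$ large, hence $\log\mu_f(r_k)\geq k/2$; so $\mathbb{P}(\text{bad at } r_k)\lesssim k^{-(3+2\delta)\alpha}$ is summable. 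Borel--Cantelli then yields, almost surely,
\[
\Big\|\sum_{n=0}^{\infty} a_n X_n z^n\Big\|_{r_k}\lesssim \sqrt{\log\log\mu_f(r_k)}\,S_f(r_k)
\]
for all but finitely many $k\in J$.

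Finally, given $r\notin E$ large, property (iii) supplies $k\in J$ with $r\leq r_k$ and $S_f(r_k)\leq e\,S_f(r)$. The Maximum Principle, applied to the analytic function $\sum_n a_n X_n z^n$, gives $\|\cdot\|_r\leq\|\cdot\|_{r_k}$. To bring the log-log factor back to $r$, I would note $\mu_f(r_k)\leq S_f(r_k)\leq e\,G_f(r)$ and invoke Lemma \ref{l-roseng} at $r\notin E$ to obtain $\mu_f(r_k)\lesssim \mu_f(r)(\log\mu_f(r))^{1/2}(\log\log\mu_f(r))^{1+\delta}$; taking $\log\log$ yields $\log\log\mu_f(r_k)=\log\log\mu_f(r)+O(1)$. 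Combining this with the bound at $r_k$ gives the desired inequality.

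The main obstacle is the coexistence on the right-hand side of $S_f(r)$ and $\log\log\mu_f(r)$: the discretizing function $\varphi$ must simultaneously keep $S_f(r_k)$ within a constant factor of $S_f(r)$ (which forces $\varphi=\log S_f$) and yet allow $\log\mu_f(r_k)$ to grow with $k$ (needed for summability) while remaining comparable to $\log\mu_f(r)$ (needed for the log-log factor). The Rosenbloom-type inequality of Lemma \ref{l-roseng}, which bounds $G_f$ by $\mu_f$ times a polylogarithm outside a set of finite logarithmic measure, is precisely the bridge that makes $S_f$ and $\mu_f$ comparable up to polylogarithms and so reconciles both requirements.
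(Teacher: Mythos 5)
Your proposal is correct and follows essentially the same route as the paper: the same exceptional set (union of the sets from Lemma \ref{rogHCELemmaProbaEstimate} and Lemma \ref{l-roseng}), the same discretization via Lemma \ref{rogHCELemmaSubseq} with $\varphi=\log S_f$, the choice $N_k\asymp(\log\mu_f(r_k))^{3/2+\delta}$, Borel--Cantelli, and the Maximum Principle, with Lemma \ref{l-roseng} used exactly as in the paper to make $\log S_f$ and $\log\mu_f$ comparable for both summability and the transfer of the $\log\log$ factor back to $r$. The only (cosmetic) difference is that you compare $\log\log\mu_f(r_k)$ with $\log\log\mu_f(r)$ directly through $G_f(r)$, while the paper routes this step through $\log\log S_f$.
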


\begin{proof}
The first assertion is given by Proposition \ref{rogHCELemmaWellDefined}.

Now let $E\subset\intervalco{0}{\infty}$ be the open set of finite logarithmic measure that is the union of the open set in
 Lemma \ref{rogHCELemmaProbaEstimate}, taken for some $\alpha>1$ and $\delta>0$, and the open set in Lemma \ref{l-roseng} for the same $\delta$. Note that $E$ has an unbounded complement.

By Lemma \ref{rogHCELemmaSubseq} applied to $\varphi=\log S_{f}$ and $\rho\geq 0$ so large that $\log\mu_{f}(\rho)> 1$ and hence $\log S_{f}(\rho) > 1$, we get an infinite set $J\subset\IN$ and an increasing sequence $(r_k)_{k\in J}$ in $[\rho,\infty)$ converging to $\infty$ and satisfying assertions \emph{\ref{rogHCELemmaSubseq1}}, \emph{\ref{rogHCELemmaSubseq2}} and \emph{\ref{rogHCELemmaSubseq3}} of the lemma.

Define for each $k\in J$ the real number
\[
N_k:=\big(\log \mu_f(r_k)\big)^{\frac{3}{2}+\delta}\geq 1
\]
and the set
\[
A_k:=\Big\{\Big\|\sum_{n=0}^{\infty}a_nX_nz^n\Big\|_{r_k}\geq C\sqrt{\log N_k}S_f(r_k)\Big\},
\]
where $C>0$ is the constant of Lemma \ref{rogHCELemmaProbaEstimate}. Then \emph{\ref{rogHCELemmaSubseq1}} of Lemma \ref{rogHCELemmaSubseq}, Lemma \ref{rogHCELemmaProbaEstimate} and the definition of $N_k$ imply that
\[
\sum_{k\in J}\mathbb{P}(A_k) \lesssim\sum_{k\in J}\frac{1}{N_k^{2\alpha}} =\sum_{k\in J}\frac{1}{\big(\log \mu_f(r_k)\big)^{\alpha(3+2\delta)}}.
\]
By Lemma \ref{l-roseng} we have, for every $r\notin E$, that
\[
\mu_f(r)\leq S_f(r)\leq G_f(r)\lesssim \mu_f(r)\big(\log \mu_f(r)\big)^{\frac{1}{2}}\big(\log\log\mu_f(r)\big)^{1+\delta}.
\]
This implies that
\begin{equation}\label{eq-equiv}
\log S_f(r)\asymp\log\mu_f(r) \ \text{for $r\notin E$.}
\end{equation}
Therefore, using \emph{\ref{rogHCELemmaSubseq1}} and \emph{\ref{rogHCELemmaSubseq2}} of Lemma \ref{rogHCELemmaSubseq}, we have that
\[
\sum_{k\in J}\mathbb{P}(A_k)\lesssim\sum_{k\in J}\frac{1}{\big(\log S_f(r_k)\big)^{\alpha(3+2\delta)}}\lesssim\sum_{k=1}^{\infty}\frac{1}{k^{\alpha(3+2\delta)}}<\infty.
\]
This in turn implies by the Borel-Cantelli lemma that, for almost every $\omega\in\Omega$, there exists $k_0(\omega)\in J$ such that, for every $k\in J$ with $k\geq k_0(\omega)$,
\begin{equation}\label{rogHCEthmMainInequalityROGsubsequence}
\Big\|\sum_{n=0}^{\infty}a_nX_n(\omega)z^n\Big\|_{r_k} \leq C\sqrt{\log N_k}S_f(r_k).
\end{equation}

Let $r>r_{k_0(\omega)}$ with $r\notin E$. By \emph{\ref{rogHCELemmaSubseq3}} of Lemma \ref{rogHCELemmaSubseq} there is some $k\in J$ with $k > k_0(\omega)$ such that $r\leq r_k$ and $\log S_f(r_k)\leq \log S_f(r)+1$, hence $S_f(r_k)\leq eS_f(r)$. The Maximum Principle, \eqref{eq-equiv} and \eqref{rogHCEthmMainInequalityROGsubsequence} yield
\begin{align*}
\Big\|\sum_{n=0}^{\infty}a_{n}X_{n}(\omega)z^n\Big\|_{r} &\leq\Big\|\sum_{n=0}^{\infty}a_{n}X_{n}(\omega)z^n\Big\|_{r_{k}}
\leq C\sqrt{\log N_k}S_f(r_k)\\
&\asymp\sqrt{\log\log \mu_{f}(r_k)}S_f(r_{k})\leq\sqrt{\log\log S_{f}(r_k)}S_f(r_{k})\\
&\lesssim\sqrt{\log\log S_{f}(r)}S_f(r)\asymp\sqrt{\log\log \mu_{f}(r)}S_f(r),
\end{align*}
which completes the proof.
\end{proof}

In Kuryliak \cite{Kuryliak2017}, the sequence $(r_k)_{k\geq 1}$ was constructed from the maximum term $\mu_f$. The idea of constructing this sequence from $S_f$ instead comes from \cite{ErdosRenyi1969} and \cite{Steele1987}.

Theorem \ref{rogHCEThmMain} generalizes Theorem 2 of Erd\H{o}s and R\'{e}nyi \cite{ErdosRenyi1969}, who use Rademacher random variables. Indeed, every bounded random variable is subgaussian. In their main result, Theorem 1, Erd\H{o}s and R\'{e}nyi obtain a rate of growth written in terms of the maximum term. We obtain Theorem \ref{t-main1} in the same way.

\begin{proof}[Proof of Theorem \ref{t-main1}]
This result is a direct consequence of Theorem \ref{rogHCEThmMain} and the Wiman-Valiron inequality in the form of Rosenbloom. Indeed, let $\delta>0$, and let $E$ be the union of the sets given by Lemma \ref{l-roseng} and Theorem \ref{rogHCEThmMain}. By Lemma \ref{l-roseng} we have for $r\notin E$ 
\[
S_f^2(r) \leq\mu_f(r)G_f(r) \lesssim \mu_f(r)^2\big(\log \mu_f(r)\big)^{\frac{1}{2}}\big(\log\log\mu_f(r)\big)^{1+\delta}.
\]
It remains to apply Theorem \ref{rogHCEThmMain}.
\end{proof}

\section{Lévy's phenomenon in the disk}\label{s-LevyD}

We now study the rate of growth for random power series on $\ID$ with subgaussian coefficients. Our aim is to prove Theorem \ref{t-main2}. The proof is very similar to that of Theorem \ref{t-main1}, with a slight complication arising from the presence of an additional term $\frac{1}{1-r}$.

Recall that a (measurable) set $E\subset\intervalco{0}{1}$ is of \textit{finite logarithmic measure} if $\int_{E}\frac{1}{1-r}\intd r<\infty$. Again, it will always suffice to show that a property holds outside a set of finite logarithmic measure and for all $r$ close enough to $1$.

The maximum modulus $M_f$ of an analytic function $f$ on $\ID$, its maximum term $\mu_f$ and the functions $S_f$ and $G_f$ are defined exactly in the same way as for entire functions.
 
Applying the Wiman-Valiron inequality of Skaskiv and Kuryliak \cite{SkaskivKuryliak2020}, see \eqref{eq-wmD2}, to $z\to\sum_{n=0}^\infty|a_n|z^n$, we have the following.

\begin{lemma}\label{l-skkurg}
Let $f(z)=\sum_{n=0}^\infty a_nz^n$ be a non-constant analytic function on $\ID$. Then, for every $\delta>0$, there is an open set $E\subset\intervalco{0}{1}$ of finite logarithmic measure and a constant $C>0$ such that, for any $r\in \intervalco{0}{1}$, $r\notin E$,
\[
G_f(r)\leq C \frac{\mu_f(r)}{1-r}\Big(\log \frac{1}{1-r}\Big)^{\frac{1}{2}+\delta}\Big(\log \frac{\mu_f(r)}{1-r}\Big)^{\frac{1}{2}}\Big(\log\log \frac{\mu_f(r)}{1-r}\Big)^{1+\delta}.
\] 
\end{lemma}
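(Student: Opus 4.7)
The strategy is exactly the one signalled by the authors in the sentence preceding the statement: transfer the deterministic disk Wiman-Valiron inequality \eqref{eq-wmD2} of Skaskiv and Kuryliak from $f$ to the majorant function $\tilde f$ with non-negative coefficients.

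\textbf{Step 1: introduce the majorant.} Define $\tilde f(z):=\sum_{n=0}^\infty |a_n|z^n$, $z\in\ID$. Since $(|a_n|)_n$ has the same radius of convergence as $(a_n)_n$, the function $\tilde f$ is analytic on $\ID$. Moreover, $\tilde f$ is non-constant: as $f$ is non-constant, some $a_n$ with $n\geq 1$ is non-zero, and hence so is $|a_n|$.

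\textbf{Step 2: identify the relevant quantities.} Because all the coefficients of $\tilde f$ are non-negative, the maximum modulus of $\tilde f$ on the circle $|z|=r$ is attained at $z=r$, so
\[
M_{\tilde f}(r)=\tilde f(r)=\sum_{n=0}^\infty |a_n|r^n=G_f(r).
\]
On the other hand, the maximum term of $\tilde f$ is
\[
\mu_{\tilde f}(r)=\max_{n\geq 0}|a_n|r^n=\mu_f(r).
\]

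\textbf{Step 3: apply \eqref{eq-wmD2} to $\tilde f$.} The Skaskiv-Kuryliak inequality provides, for the given $\delta>0$, a set $E\subset\intervalco{0}{1}$ of finite logarithmic measure and a constant $C>0$ such that
\[
M_{\tilde f}(r)\leq C \frac{\mu_{\tilde f}(r)}{1-r}\Big(\log \frac{1}{1-r}\Big)^{\frac{1}{2}+\delta}\Big(\log \frac{\mu_{\tilde f}(r)}{1-r}\Big)^{\frac{1}{2}}\Big(\log\log \frac{\mu_{\tilde f}(r)}{1-r}\Big)^{1+\delta}
\]
for $r\notin E$. Substituting the identifications from Step 2 yields precisely the claimed inequality for $G_f(r)$.

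\textbf{Step 4: openness of $E$.} Both sides of the inequality are continuous functions of $r\in\intervalco{0}{1}$ (for the maximum term $\mu_f$ see \cite[Satz 4.2]{JankVolkmann1985}, and $G_f$ is continuous on the open interval as a sum of a power series), so the set where the inequality fails is open; enlarging $E$ to this open set (which is still of finite logarithmic measure, being contained in the original one modulo a null set) gives the desired open exceptional set. There is no real obstacle: the lemma is a direct corollary of \eqref{eq-wmD2}, the only point to verify being that $\tilde f$ is analytic and non-constant, and that its maximum modulus and maximum term coincide with $G_f$ and $\mu_f$, respectively.
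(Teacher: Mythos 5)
Your proposal is correct and follows exactly the paper's own (one-line) argument: apply the Skaskiv--Kuryliak inequality \eqref{eq-wmD2} to the majorant $z\mapsto\sum_{n=0}^\infty|a_n|z^n$, whose maximum modulus is $G_f$ and whose maximum term is $\mu_f$, and take $E$ open since both sides of the inequality are continuous. The only cosmetic remark is that in Step 4 the failure set is simply an open subset of the original exceptional set (the inequality holds off $E$), so no ``modulo a null set'' caveat is needed.
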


Again, $E$ can be chosen to be open because both sides of the inequality are continuous. And we will continue to write $\|f\|_r = M_f(r)=\max_{|z|=r}|f(z)|$.

\begin{lemma}\label{rogHDELemmaprobaTail}
Let $f(z)=\sum_{n=0}^\infty a_nz^n$ be a non-constant analytic function on $\ID$ so that $\lim_{r\to 1}\mu_f(r)>e$, and let $(X_n)_{n\geq 0}$ be an independent centred subgaussian sequence. Let $\alpha>1$ and $\delta>0$. Then $\sum_{n= 0}^\infty a_nX_nz^n$ defines almost surely an analytic function on $\ID$, and there exists a constant $C>0$ and an open set $E\subset\intervalco{0}{1}$ of finite logarithmic measure such that, for any $r\notin E$,
\[
\mathbb{P}\Big(\Big\|\sum_{n=0}^{\infty}a_nX_nz^n\Big\|_{r} \geq C\sqrt{\log N}S_{f}(r)\Big)\leq\frac{C}{N^{2\alpha}}
\]
whenever $N\geq\frac{1}{(1-r)^2}\big(\log\frac{\mu_f(r)}{1-r}\big)^{2+\delta}$.
\end{lemma}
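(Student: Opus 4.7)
The almost sure analyticity of $\sum_{n=0}^\infty a_nX_nz^n$ on $\ID$ follows at once from Proposition \ref{rogHCELemmaWellDefined} applied with $R=1$. The rest of the argument mirrors that of Lemma \ref{rogHCELemmaProbaEstimate}, the only novelty being the appearance of the factor $\frac{1}{1-r}$, which forces different choices of the auxiliary function $h$ in Lemma \ref{rogHCELemmaseriesEstimate} and of the bound on $N$.

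The plan is as follows. First, I would apply Lemma \ref{rogHCELemmaseriesEstimate} to the power series $G_f(r)=\sum_{n=0}^\infty |a_n|r^n$ with $R=1$, $h(r)=\frac{1}{1-r}$ (which is continuous, increasing, and satisfies $\int_\rho^1 \frac{h(r)}{r}\intd r=\infty$) and with $\delta/2$ in place of $\delta$. Since $1/r$ is bounded on any $[\rho,1)$ with $\rho>0$, the exceptional set of that lemma has finite logarithmic measure in the disk sense. This gives
\[
\sum_{n=0}^\infty n|a_n|r^n \leq \frac{1}{1-r}\, G_f(r)\big(\log G_f(r)\big)^{1+\delta/2}
\]
outside an open set of finite logarithmic measure. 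Next, I would invoke Lemma \ref{l-skkurg} (again with $\delta/2$) to bound $G_f(r)$ in terms of $\mu_f(r)/(1-r)$ and its logarithms, and take $E$ to be the union of these two open sets.

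Using $\mu_f(r)\leq G_f(r)$ and $\log\frac{1}{1-r}\leq\log\frac{\mu_f(r)}{1-r}$ (valid once $\mu_f(r)>1$), the combination of these two estimates yields, for $r\notin E$ sufficiently close to $1$,
\[
\sum_{n=0}^\infty n|a_n|r^n \lesssim \frac{\mu_f(r)}{(1-r)^2}\Big(\log\tfrac{\mu_f(r)}{1-r}\Big)^{2+\delta}.
\]
Now, following the plane case, I define $B_n=\{|X_n|>n^{1-1/\alpha}\}$ and $B(N)=\bigcup_{n>N^\alpha}B_n$; the subgaussian assumption gives $\mathbb{P}(B(N))\lesssim N^{-2\alpha}$. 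On the complement of $B(N)$,
\[
\Big\|\sum_{n>N^\alpha}a_nX_nz^n\Big\|_r \leq N^{-1}\sum_{n=0}^\infty n|a_n|r^n \lesssim N^{-1}\frac{\mu_f(r)}{(1-r)^2}\Big(\log\tfrac{\mu_f(r)}{1-r}\Big)^{2+\delta},
\]
and by the hypothesis on $N$ the right-hand side is $\lesssim \mu_f(r)\leq S_f(r)$.

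For the low-degree part $\sum_{0\leq n\leq N^\alpha}a_nX_nz^n$, I would apply Kahane's Lemma \ref{rogHCELemmasupTrigoPoly} to the trigonometric polynomials $q_n(t)=a_nr^ne^{int}$, with Kahane's parameters both equal to $\lfloor N^\alpha\rfloor$. Since $\sum_{n=0}^{\lfloor N^\alpha\rfloor}\|q_n\|_\infty^2\leq S_f(r)^2$ and $\sqrt{\log\lfloor N^\alpha\rfloor}\asymp\sqrt{\log N}$, this yields
\[
\mathbb{P}\Big(\Big\|\sum_{0\leq n\leq N^\alpha}a_nX_nz^n\Big\|_r\geq C_2\sqrt{\log N}\,S_f(r)\Big)\lesssim \frac{1}{\lfloor N^\alpha\rfloor^2}\lesssim \frac{1}{N^{2\alpha}}.
\]
A union bound over the low- and high-degree events completes the argument.

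The main obstacle is the bookkeeping of the various $\delta$ parameters and log factors in step three: one has to check that, after substituting the Skaskiv--Kuryliak bound for $G_f$ into the series estimate and using that $\log\frac{1}{1-r}$ is dominated by $\log\frac{\mu_f(r)}{1-r}$, the resulting exponent of $\log\frac{\mu_f(r)}{1-r}$ collapses to $2+\delta$ (and the extra $\log\log$ factor is absorbed) so that the threshold $N\geq\frac{1}{(1-r)^2}\big(\log\frac{\mu_f(r)}{1-r}\big)^{2+\delta}$ is exactly what forces the tail contribution to be at most $\mu_f(r)$. Beyond this, the proof follows the pattern of the plane case essentially verbatim.
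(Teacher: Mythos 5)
Your plan is correct and follows the paper's proof essentially verbatim: the same splitting at degree $N^{\alpha}$, the same events $B_n$, $B(N)$ controlled by the subgaussian tail, the tail sum bounded via Lemma \ref{rogHCELemmaseriesEstimate} with $h(r)=\frac{1}{1-r}$ combined with Lemma \ref{l-skkurg}, and Kahane's Lemma \ref{rogHCELemmasupTrigoPoly} with parameters $\lfloor N^{\alpha}\rfloor$ for the low-degree part. The only point to tighten is the bookkeeping you yourself flag: if you apply Lemma \ref{l-skkurg} with $\delta/2$, the product carries an unabsorbed factor $\big(\log\log\frac{\mu_f(r)}{1-r}\big)^{1+\delta/2}$ beyond the exponent $2+\delta$, so you must apply it with a smaller parameter (the paper uses $\delta/4$, absorbing $\big(\log\frac{1}{1-r}\big)^{\frac12+\frac{\delta}{4}}\big(\log\frac{\mu_f(r)}{1-r}\big)^{\frac12}\big(\log\log\frac{\mu_f(r)}{1-r}\big)^{1+\frac{\delta}{4}}$ into $\big(\log\frac{\mu_f(r)}{1-r}\big)^{1+\frac{\delta}{2}}$), after which your step three indeed collapses to $\frac{\mu_f(r)}{(1-r)^2}\big(\log\frac{\mu_f(r)}{1-r}\big)^{2+\delta}$ exactly as needed.
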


\begin{proof}
The first assertion follows from Proposition \ref{rogHCELemmaWellDefined}.

We next apply Lemma \ref{rogHCELemmaseriesEstimate} to the power series $G_f(r)=\sum_{n=0}^\infty |a_n|r^n$, to the function $h(r)=\frac{1}{1-r}$, and to $\delta/2$; then the exceptional set is of finite logarithmic measure. Let $E\subset\intervalco{0}{1}$ be the open set of finite logarithmic measure that is the union of the open sets in this lemma and in Lemma \ref{l-skkurg}, applied for $\delta/4$. Then we have, for any $r\notin E$ sufficiently large,
\begin{equation}\label{eq-andg}
\sum_{n=0}^{\infty}n|a_n|r^n \leq \frac{1}{1-r}G_f(r)\big(\log G_f(r)\big)^{1+\frac{\delta}{2}}
\end{equation}
and 
\begin{equation}\label{eq-wm2dg}
\begin{split}
G_f(r)&\lesssim \frac{\mu_f(r)}{1-r}\Big(\log \frac{1}{1-r}\Big)^{\frac{1}{2}+\frac{\delta}{4}}\Big(\log \frac{\mu_f(r)}{1-r}\Big)^{\frac{1}{2}}\Big(\log\log \frac{\mu_f(r)}{1-r}\Big)^{1+\frac{\delta}{4}}\\
&\lesssim \frac{\mu_f(r)}{1-r}\Big(\log \frac{\mu_f(r)}{1-r}\Big)^{1+\frac{\delta}{2}}\lesssim \Big( \frac{\mu_f(r)}{1-r}\Big)^{1+\delta},
\end{split}
\end{equation} 
where we have used that $\mu_f(r)\geq 1$ for large $r$.

Let $\alpha>1$. Define $B_n:=\{|X_n|> n^{1-1/\alpha}\}\subset\Omega$ for $n\geq 1$ and $B(N):=\bigcup_{n>N^\alpha}B_n$ for any real $N\geq 1$. Then the argument in the proof of Lemma \ref{rogHCELemmaProbaEstimate} shows that
\[
\mathbb{P}(B(N))\lesssim\frac{1}{N^{2\alpha}}
\]
and that we have on the complement of $B(N)$ for $r\geq 0$ 
\[
\Big\|\sum_{n>N^{\alpha}}a_nX_nz^n\Big\|_{r} \leq\frac{1}{N}\sum_{n>N^{\alpha}}n|a_n|r^n.
\]

Let $r_0>0$ satisfy $\mu_f(r_0)>e$. Let $r\geq r_0$, $r\notin E$, and let $N\geq\frac{1}{(1-r)^2}\big(\log\frac{\mu_f(r)}{1-r}\big)^{2+\delta}>1$. Note that the hypothesis on $\mu_f$ implies that $\lim_{r\to 1}G_f(r)>1$. By \eqref{eq-andg} and \eqref{eq-wm2dg}, we get on the complement of $B(N)$ that
\begin{align*}
\Big\|\sum_{n>N^{\alpha}}a_nX_nz^n\Big\|_{r} &\leq \frac{1}{N}\frac{1}{1-r}G_f(r)\big(\log G_f(r)\big)^{1+\frac{\delta}{2}}\\
&\lesssim \frac{1}{N}\frac{\mu_f(r)}{(1-r)^2}\Big(\log \frac{\mu_f(r)}{1-r}\Big)^{1+\frac{\delta}{2}}\Big(\log\frac{\mu_f(r)}{1-r}\Big)^{1+\frac{\delta}{2}}\\
&= \frac{1}{N}\frac{\mu_f(r)}{(1-r)^2}\Big(\log \frac{\mu_f(r)}{1-r}\Big)^{2+\delta} \leq \mu_f(r) \leq S_f(r).
\end{align*}

Therefore there is a constant $C_1>0$ such that if $r\geq r_0$, $r\notin E$, and $N\geq\frac{1}{(1-r)^2}\big(\log\frac{\mu_f(r)}{1-r}\big)^{2+\delta}$ then
\[
\mathbb{P}\Big(\Big\|\sum_{n>N^{\alpha}}a_nX_nz^n\Big\|_{r}>C_1S_f(r)\Big) \leq\mathbb{P}(B(N))\lesssim\frac{1}{N^{2\alpha}}.
\]

By Lemma \ref{rogHCELemmasupTrigoPoly} applied to $q_n(t)=a_nr^ne^{int}$, $t\in \intervalcc{0}{2\pi}$, $n\geq 0$, with $M$ and $N$ given by $\lfloor N^{\alpha} \rfloor$, we have on the other hand that there is a constant $C_2>0$ such that
\[
\mathbb{P}\Big(\Big\|\sum_{0\leq n\leq N^{\alpha}}a_nX_nz^n\Big\|_{r}\geq C_2\sqrt{\log N}S_f(r)\Big)\lesssim\frac{1}{N^{2\alpha}}.
\]

Altogether there is some constant $C>0$ such that
\[
\mathbb{P}\Big(\Big\|\sum_{n=0}^{\infty}a_nX_nz^n\Big\|_{r} \geq C\sqrt{\log N}S_f(r) \Big)
\lesssim\frac{1}{N^{2\alpha}}+\frac{1}{N^{2\alpha}}
\]
for $r\geq r_0$, $r\notin E$, and $N\geq\frac{1}{(1-r)^2}\big(\log\frac{\mu_f(r)}{1-r}\big)^{2+\delta}$. This completes the proof.
\end{proof}

Due to the additional term $\frac{1}{1-r}$ we will now need a more elaborate version of Lemma \ref{rogHCELemmaSubseq}. In view of Section \ref{s-unif} we formulate it here for arbitrary $R>0$.

\begin{lemma}\label{rogHDELemmasubseq}
Let $0<R\leq \infty$. Let $\varphi,\psi_1,\psi_2: \intervalco{\rho}{R}\to\intervalco{1}{\infty}$ be continuous increasing functions such that $\lim_{r\to R}\varphi(r)=\infty$, where $\rho \in \intervalco{0}{R}$. Let $E\subset\intervaloo{\rho}{R}$ be an open set whose complement has $R$ as limit point. Then there exists an infinite set $J\subset \IN^3$ and a family $(r_{l,k,j})_{(l,k,j)\in J}$ in $[\rho,R)$ such that
\begin{enumerate}[label=\emph{(\roman*)}]
\item for any $(l,k,j)\in J$, $r_{l,k,j}\notin E$,\label{rogHDEsubsequenceLemma1}
\item for any $(l,k,j)\in J$, $l\leq \varphi(r_{l,k,j})\leq l+1$, $k\leq \psi_1(r_{l,k,j})\leq k+1$ and $j\leq \psi_2(r_{l,k,j})\leq j+1$,\label{rogHDEsubsequenceLemma2}
\item for any $r\in \intervalco{\rho}{R}$ with $r\notin E$ there exists $(l,k,j)\in J$ such that $r\leq r_{l,k,j}$, $\varphi(r_{l,k,j})\leq \varphi(r)+1$, $\psi_1(r_{l,k,j})\leq \psi_1(r)+1$ and $\psi_2(r_{l,k,j})\leq \psi_2(r)+1$. \label{rogHDEsubsequenceLemma3}
\end{enumerate}
\end{lemma}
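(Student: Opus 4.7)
The approach is a direct generalization of the construction used in Lemma \ref{rogHCELemmaSubseq}: rather than slicing $[\rho,R)$ according to the level sets of a single function $\varphi$, we slice it according to the joint level sets of the three functions $\varphi, \psi_1, \psi_2$, and then pick the rightmost non-exceptional point in each slice.

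Concretely, for each triple $(l,k,j)\in\IN^3$ I would define
\[
U_{l,k,j}:=\{r\in\intervalco{\rho}{R} : l\leq \varphi(r)\leq l+1,\ k\leq \psi_1(r)\leq k+1,\ j\leq \psi_2(r)\leq j+1\}.
\]
Continuity of the three functions makes each $U_{l,k,j}$ closed in $\intervalco{\rho}{R}$, and the condition $\varphi(r)\leq l+1$ combined with $\lim_{r\to R}\varphi(r)=\infty$ forces $U_{l,k,j}$ to lie in some compact subinterval $[\rho,r^*]$ with $r^*<R$; hence $U_{l,k,j}$ is compact in $\IR$. Since $E$ is open, $U_{l,k,j}\setminus E$ is also compact. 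Then set $J:=\{(l,k,j)\in\IN^3 : U_{l,k,j}\setminus E\neq\varnothing\}$ and, for each $(l,k,j)\in J$, let $r_{l,k,j}:=\sup(U_{l,k,j}\setminus E)$. Since the supremum of a non-empty compact subset of $\IR$ belongs to the set, we immediately obtain $r_{l,k,j}\in U_{l,k,j}\setminus E$, which yields assertions \ref{rogHDEsubsequenceLemma1} and \ref{rogHDEsubsequenceLemma2}.

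For assertion \ref{rogHDEsubsequenceLemma3}, given any $r\in\intervalco{\rho}{R}$ with $r\notin E$, the hypothesis that $\varphi,\psi_1,\psi_2$ take values in $\intervalco{1}{\infty}$ gives integers $l,k,j\geq 1$ with $l\leq\varphi(r)\leq l+1$, $k\leq\psi_1(r)\leq k+1$, $j\leq\psi_2(r)\leq j+1$, so $r\in U_{l,k,j}\setminus E$. Therefore $(l,k,j)\in J$ and $r\leq r_{l,k,j}$ by the definition of the supremum, while $r_{l,k,j}\in U_{l,k,j}$ gives the three desired inequalities $\varphi(r_{l,k,j})\leq l+1\leq \varphi(r)+1$ and similarly for $\psi_1$ and $\psi_2$.

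It remains to argue that $J$ is infinite, and this is the only step requiring real care. The assumption that the complement of $E$ has $R$ as a limit point produces a sequence $(s_n)$ in $\intervalco{\rho}{R}\setminus E$ with $s_n\to R$; then $\varphi(s_n)\to\infty$, so for arbitrarily large $l\in\IN$ there is some $s_n$ with $l\leq\varphi(s_n)\leq l+1$, forcing $s_n\in U_{l,k,j}\setminus E$ for the appropriate $k,j$ (determined by $\psi_1(s_n),\psi_2(s_n)$), hence $(l,k,j)\in J$. Thus $J$ contains triples with arbitrarily large first coordinate, so $J$ is infinite. No monotonicity of the family $(r_{l,k,j})_{(l,k,j)\in J}$ needs to be established, which simplifies the situation compared to Lemma \ref{rogHCELemmaSubseq}; the only genuinely new ingredient over the single-function case is verifying compactness of the three-parameter slices, for which the assumption that only $\varphi$ tends to $\infty$ suffices.
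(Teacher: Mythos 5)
Your proposal is correct and follows essentially the same route as the paper's proof: the same joint level sets $U_{l,k,j}$, the same choice $r_{l,k,j}=\sup(U_{l,k,j}\setminus E)$ (attained by compactness since $E$ is open), and the same arguments for (i)--(iii) and for the infinitude of $J$ via points of the complement of $E$ accumulating at $R$ together with $\varphi(r)\to\infty$. Your treatment of the infinitude of $J$ is just a slightly more detailed spelling-out of the paper's one-line justification.
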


\begin{proof}
Define for each $l,k,j\geq 1$ the possibly empty set
\[
U_{l,k,j}:=\big\{\rho\leq r<R: l\leq \varphi(r)\leq l+1,\ k\leq \psi_1(r)\leq k+1\text{ and } j\leq \psi_2(r)\leq j+1\big\}.
\]
These sets are closed in $[0,R)$ since $\varphi$, $\psi_1$ and $\psi_2$ are continuous, and bounded away from $R$ since $\lim_{r\to R}\varphi(r)=\infty$, and thus they are compact. Define $J:=\{(l,k,j)\in\IN^3: U_{l,k,j}\setminus E\neq\varnothing\}$. For each $(l,k,j)\in J$, there exists $r_{l,k,j}\in U_{l,k,j}\setminus E$ such that $r_{l,k,j}=\sup (U_{l,k,j}\setminus E)$. This shows (i) and (ii). Since $\lim_{r\to R}\varphi(r)=\infty$ and the complement of $E$ has $R$ as limit point, $J$ is an infinite set.

Let $\rho\leq r<R$. Since $\varphi(\rho)\geq 1$, $\psi_1(\rho)\geq 1$ and $\psi_2(\rho)\geq 1$, there exists $(l,k,j)\in \IN^3$ such that $l\leq \varphi(r)\leq l+1$,  $k\leq \psi_1(r)\leq k+1$ and $j\leq \psi_2(r)\leq j+1$. If $r\notin E$ then $(l,k,j)\in J$, and $r\leq r_{l,k,j}$ by definition of $r_{l,k,j}$. By definition of $U_{l,k,j}$, we also have $\varphi(r_{l,k,j})\leq \varphi(r)+1$, $\psi_1(r_{l,k,j})\leq \psi_1(r)+1$ and $\psi_2(r_{l,k,j})\leq \psi_2(r)+1$.
\end{proof}

\begin{remark} For a possible future application let us note that the previous lemma can obviously be extended to any number of functions $\psi_1,\ldots,\psi_n$, $n\geq 2$.
\end{remark}

The next theorem is the main result of this section.

\begin{theorem}\label{rogHDEThmMain}
Let $f(z)=\sum_{n=0}^\infty a_nz^n$ be a non-constant analytic function on $\ID$ and $(X_n)_{n\geq 0}$ an independent centred subgaussian sequence. Then $\sum_{n=0}^\infty a_nX_nz^n$ defines almost surely an analytic function on $\ID$. Moreover, there exists an open set $E\subset\intervalco{0}{1}$ of finite logarithmic measure and a constant $C>0$ such that, almost surely, there exists some $r_0\in \intervaloo{0}{1}$ such that
\[
\max_{|z|=r}\Big|\sum_{n=0}^{\infty}a_nX_nz^n\Big| \leq C\sqrt{\log\Big(\frac{1}{1-r}\log\frac{\mu_f(r)}{1-r}\Big)}S_f(r)
\]
for $r_0\leq r<1$, $r\notin E$.
\end{theorem}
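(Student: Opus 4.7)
The plan is to imitate the proof of Theorem~\ref{rogHCEThmMain}, replacing the one-parameter subsequence lemma by the three-parameter Lemma~\ref{rogHDELemmasubseq}. First I would dispose of the analyticity claim by Proposition~\ref{rogHCELemmaWellDefined}, and let $E\subset[0,1)$ be the union of the open sets provided by Lemma~\ref{rogHDELemmaprobaTail} (for a fixed $\alpha>1$ and some $\delta>0$) and by Lemma~\ref{l-skkurg} (same $\delta$); this is still open, of finite logarithmic measure, and with $1$ as a limit point of its complement.

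Next, for $\rho\in[0,1)$ large enough that $\mu_f(\rho)>e^e$ and $S_f(\rho)\geq e$, I would apply Lemma~\ref{rogHDELemmasubseq} with
\[
\varphi(r)=\log S_f(r),\qquad \psi_1(r)=\log\tfrac{1}{1-r},\qquad \psi_2(r)=\log\log\tfrac{\mu_f(r)}{1-r},
\]
obtaining an infinite $J\subset\IN^3$ and a family $(r_{l,k,j})_{(l,k,j)\in J}$ in $[\rho,1)\setminus E$. For each index triple define
\[
N_{l,k,j}:=\tfrac{1}{(1-r_{l,k,j})^2}\Bigl(\log\tfrac{\mu_f(r_{l,k,j})}{1-r_{l,k,j}}\Bigr)^{2+\delta}
\]
(the threshold appearing in Lemma~\ref{rogHDELemmaprobaTail}) and the event
\[
A_{l,k,j}:=\Bigl\{\bigl\|\textstyle\sum_n a_nX_nz^n\bigr\|_{r_{l,k,j}}\geq C\sqrt{\log N_{l,k,j}}\,S_f(r_{l,k,j})\Bigr\},
\]
with $C$ the constant of Lemma~\ref{rogHDELemmaprobaTail}, so that $\mathbb{P}(A_{l,k,j})\lesssim N_{l,k,j}^{-2\alpha}$.

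The main obstacle is pushing Borel--Cantelli through with three indices. Property~(ii) of Lemma~\ref{rogHDELemmasubseq} gives $N_{l,k,j}\geq e^{2k+(2+\delta)j}$, which does not involve $l$ at all. The key point is that for fixed $(k,j)$ the number of $l$'s with $(l,k,j)\in J$ is controlled by the oscillation of $\log S_f$ on the slice $\{r\in[\rho,1)\setminus E:\psi_1(r)\in[k,k+1],\,\psi_2(r)\in[j,j+1]\}$. On that slice Lemma~\ref{l-skkurg} yields $\log S_f(r)\leq\log G_f(r)\lesssim \log\tfrac{\mu_f(r)}{1-r}\leq e^{j+1}$ (the additional $\log\log$-factors from Lemma~\ref{l-skkurg} being absorbed), so that at most $\lesssim e^{j+1}$ values of $l$ arise. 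Consequently
\[
\sum_{(l,k,j)\in J}\mathbb{P}(A_{l,k,j})\;\lesssim\;\sum_{k\geq 1}e^{-4\alpha k}\sum_{j\geq 1}e^{j+1}\,e^{-2\alpha(2+\delta)j}\;<\;\infty,
\]
since $\alpha>1$ ensures $4\alpha>0$ and $2\alpha(2+\delta)>1$.

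Finally, Borel--Cantelli provides, almost surely, a finite subset of $J$ outside which no $A_{l,k,j}$ occurs; choose $r_0\in(0,1)$ larger than the finitely many exceptional $r_{l,k,j}$. For any $r\in[r_0,1)\setminus E$, property~(iii) of Lemma~\ref{rogHDELemmasubseq} produces $(l,k,j)\in J$ with $r\leq r_{l,k,j}$ and $\varphi,\psi_1,\psi_2$ at $r_{l,k,j}$ exceeding those at $r$ by at most $1$. The Maximum Principle, together with $S_f(r_{l,k,j})\leq e\,S_f(r)$ and
\[
\log N_{l,k,j}\leq 2(\psi_1(r)+1)+(2+\delta)(\psi_2(r)+1)\;\asymp\;\log\Bigl(\tfrac{1}{1-r}\log\tfrac{\mu_f(r)}{1-r}\Bigr),
\]
then delivers the announced bound at $r$.
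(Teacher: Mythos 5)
Your argument is essentially the paper's own proof: the same combination of Proposition~\ref{rogHCELemmaWellDefined}, Lemmas~\ref{l-skkurg}, \ref{rogHDELemmaprobaTail} and \ref{rogHDELemmasubseq}, Borel--Cantelli over a triple-indexed family of radii, and the Maximum Principle. You merely index by $\bigl(\log S_f,\ \log\tfrac{1}{1-r},\ \log\log\tfrac{\mu_f(r)}{1-r}\bigr)$ where the paper uses $\bigl(\log\tfrac{1}{1-r},\ \log\mu_f,\ \log S_f\bigr)$, and your multiplicity count (at most $\lesssim e^{j+1}$ values of $l$ for each fixed $(k,j)$, via Lemma~\ref{l-skkurg}) plays exactly the role of the paper's bound $j\lesssim l+k$ from \eqref{eq-musg}; the convergence of the Borel--Cantelli series and the final transfer from $r_{l,k,j}$ to $r$ then go through as you describe.

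Two small slips should be repaired, since as written they fail for, e.g., $f(z)=z/10$. First, you cannot in general choose $\rho$ with $\mu_f(\rho)>e^{e}$ and $S_f(\rho)\geq e$, and Lemma~\ref{rogHDELemmaprobaTail} is only stated under the hypothesis $\lim_{r\to 1}\mu_f(r)>e$; as in the paper, first multiply $f$ by a constant, which is harmless for the stated conclusion because $\log\tfrac{\mu_f(r)}{1-r}\to\infty$ as $r\to 1$, so the bound for the rescaled function is comparable to the bound for $f$ near $1$. Second, Lemma~\ref{rogHDELemmasubseq} requires $\lim_{r\to 1}\varphi(r)=\infty$, which your choice $\varphi=\log S_f$ need not satisfy (e.g.\ if $f$ has square-summable coefficients, $S_f$ is bounded); simply let $\log\tfrac{1}{1-r}$ play the role of $\varphi$ and $\log S_f$ that of one of the $\psi_i$ --- the lemma's conclusions treat the three functions symmetrically, so nothing else in your argument changes.
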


\begin{proof}
After multiplying $f$ by a constant, if necessary, we may assume that $\lim_{r\to 1}\mu_f(r)>e$. 

Now let $E\subset\intervalco{0}{1}$ be the open set of finite logarithmic measure that is the union of the open set in
 Lemma \ref{rogHDELemmaprobaTail}, taken for some $\alpha>1$ and $\delta>0$, and the open set in Lemma \ref{l-skkurg} for the same $\delta$. Note that the complement of $E$ has 1 as limit point.

We apply Lemma \ref{rogHDELemmasubseq} to $R=1$, $\varphi(r)=\log \frac{1}{1-r}$, $\psi_1=\log\mu_f$ and $\psi_2=\log S_f$ with $0<\rho<1$ so large that $\varphi(\rho)\geq 1$, $\psi_1(\rho)\geq 1$ and $\psi_2(\rho)\geq 1$. Let $(r_{l,k,j})_{(l,k,j)\in J}$ be the family given by the lemma.

By \eqref{eq-wm2dg} we have that, for any $r\geq \rho$ and $r\notin E$,
\[
S_f(r) \leq G_f(r) \lesssim\Big(\frac{\mu_f(r)}{1-r}\Big)^{1+\delta}.
\]
Then we have by (i) and (ii) of Lemma \ref{rogHDELemmasubseq} that, for any $(l,k,j)\in J$, $e^j \lesssim e^{(1+\delta)(k+1)}e^{(1+\delta)(l+1)}$ and hence
\begin{equation}\label{eq-musg}
j \lesssim l+k.
\end{equation}

Define for each $(l,k,j)\in J$ the real number
\[
N_{l,k,j}:=\frac{1}{(1-r_{l,k,j})^2}\Big(\log\frac{\mu_f(r_{l,k,j})}{1-r_{l,k,j}}\Big)^{2+\delta}\geq 1
\]
and the set
\[
A_{l,k,j}:=\Big\{\Big\|\sum_{n=0}^{\infty}a_nX_nz^n\Big\|_{r_{l,k,j}}\geq C\sqrt{\log N_{l,k,j}}S_f(r_{l,k,j})\Big\},
\]
where $C>0$ is the constant of Lemma \ref{rogHDELemmaprobaTail}. Then (i) of Lemma \ref{rogHDELemmasubseq}, Lemma \ref{rogHDELemmaprobaTail}, the definition of $N_{l,k,j}$, (ii) of Lemma \ref{rogHDELemmasubseq} and \eqref{eq-musg} imply that
\begin{align*}
\sum_{(l,k,l)\in J}\mathbb{P}(A_{l,k,j}) \lesssim\sum_{(l,k,j)\in J}\frac{1}{N_{l,k,j}^{2\alpha}}
&=\sum_{(l,k,j)\in J}\frac{(1-r_{l,k,j})^{4\alpha}}{\big(\log\frac{\mu_f(r_{l,k,j})}{1-r_{l,k,j}}\big)^{2\alpha(2+\delta)}}\\
&\leq\sum_{(l,k,j)\in J}\frac{1}{e^{l4\alpha}(l+k)^{2\alpha(2+\delta)}}\\
&\lesssim \sum_{l,k\geq 1}\frac{l+k}{e^{l4\alpha}(l+k)^{2\alpha(2+\delta)}}<\infty.
\end{align*}

By the Borel-Cantelli lemma, we have that, for almost every $\omega\in\Omega$, there exist $l_0(\omega)$, $k_0(\omega)$, $j_0(\omega)\geq 1$ such that, for every $(l,k,j)\in J$, whenever $l > l_0(\omega)$ or $k > k_0(\omega)$ or $j> j_0(\omega)$ then
\begin{equation}\label{rogHDEROGsubsequence}
\Big\|\sum_{n=0}^{\infty}a_nX_n(\omega)z^n\Big\|_{r_{l,k,j}} \leq C\sqrt{\log N_{l,k,j}}S_f(r_{l,k,j}).
\end{equation}

We set $r_0(\omega):=\max_{l\leq l_0(\omega),k\leq k_0(\omega),j\leq j_0(\omega)}r_{l,k,j}$. Let $r>r_0(\omega)$ with $r\notin E$. By \emph{\ref{rogHDEsubsequenceLemma3}} of Lemma \ref{rogHDELemmasubseq}, there exists $(l,k,j)\in J$ such that $r\leq r_{l,k,j}$, $\varphi(r_{l,k,j})\leq \varphi(r)+1$, $\psi_1(r_{l,k,j})\leq \psi_1(r)+1$ and $\psi_2(r_{l,k,j})\leq \psi_2(r)+1$. Since $r>r_0(\omega)$ we must have either $l>l_0(\omega)$ or $k> k_0(\omega)$ or $j> j_0(\omega)$, hence \eqref{rogHDEROGsubsequence} holds. The Maximum Principle then implies that
\begin{align*}
\Big\|\sum_{n=0}^{\infty}a_nX_n(\omega)z^n\Big\|_{r} &\leq\Big\|\sum_{n=0}^{\infty}a_nX_n(\omega)z^n\Big\|_{r_{l,k,j}}\lesssim\sqrt{\log N_{l,k,j}}S_f(r_{l,k,j})\\
&\lesssim\sqrt{\log\Big(\frac{1}{1-r_{l,k,j}}\log\frac{\mu_f(r_{l,k,j})}{1-r_{l,k,j}}\Big)}S_f(r_{l,k,j})\\
&\lesssim\sqrt{\log\Big(\frac{1}{1-r}\log\frac{\mu_f(r)}{1-r}\Big)}S_f(r),
\end{align*}
which completes the proof.
\end{proof}

We can now prove Theorem \ref{t-main2} exactly as we proved Theorem \ref{t-main1} at the end of the previous section. For this, we estimate 
$\log\big(\frac{1}{1-r}\log\frac{\mu_f(r)}{1-r}\big)$ for large $r$ by $\big(\log\frac{1}{1-r}\big)\big(\log\frac{\mu_f(r)}{1-r}\big)^{\delta}$.

\begin{remark}\label{r-KS23}
Kuryliak and Skaskiv \cite[Theorem 1, Corollary 1]{KuryliakSkaskiv2023} obtain a weaker version of Theorem \ref{t-main2} under the additional assumption that, for some $\beta>0$ and some $N\geq 0$, $\sup_{n\geq N} E\big(|X_n|^{-\beta}\big)<\infty$, see \cite[(7)]{KuryliakSkaskiv2023}; the authors take the infimum instead of the supremum, but the proof of \cite[Proposition 1]{KuryliakSkaskiv2023} shows that this is a misprint. This additional assumption is not satisfied, for example, for the centred subgaussian sequence $(X_n)_n$ where $X_n$ is uniformly distributed on $[-\frac{1}{n+1},\frac{1}{n+1}]$, $n\geq 0$.
\end{remark}

\section{A unified result}\label{s-unif}
In this brief section we unify the results in the previous two sections and generalize them to other notions of exceptional sets. The results concern any analytic function in a disk $|z|<R$, $0<R\leq \infty$. The growth-related functions $M_f$, $\mu_f$, $S_f$, $G_f$ are defined as before.

\begin{definition} Let $0<R\leq \infty$. Let $h:\intervalco{\rho}{R}\to\intervalco{0}{\infty}$ be a continuous increasing function with $\int_{\rho}^R\frac{h(r)}{r}\emph{\intd} r=\infty$ for some $\rho\in \intervalco{0}{R}$. Then a set $E\subset \intervalco{0}{R}$ is said to be of \emph{finite $h$-logarithmic measure} if 
\[
\int_{E\cap \intervalco{\rho}{R}} \frac{h(r)}{r}\intd r<\infty.
\]
\end{definition}

See \cite{SkaskivKuryliak2020} and \cite{KuryliakSkaskiv2023} for this notion; the name seems to derive from the fact that $\frac{h(r)}{r}\intd r= h(r)\intd \ln r$. This generalizes the notion of logarithmic measure for $R=\infty$ (where $h$ is constant) and for $R=1$ (where $h(r)=\frac{1}{1-r}$).  

The following Wiman-Valiron inequality for an arbitrary $R\in\intervaloc{0}{\infty}$ is essentially due to Skaskiv and Kuryliak \cite{SkaskivKuryliak2020}: see the penultimate inequality in the proof of their Theorem 1 and note that, for any $\varepsilon>0$, $\max(a,b)\lesssim ab$ if $a,b\geq \varepsilon$. Another proof can be given with \cite[Theorem 2.1]{GrosseErdmann2024}: see Remark 2.4(c) and the discussion after Theorem 1.6 there.

\begin{theorem}[Skaskiv, Kuryliak]\label{t-SkaskivKuryliak}
Let $0<R\leq \infty$. Let $f(z)=\sum_{n=0}^\infty a_nz^n$ be a non-constant analytic function for $|z|<R$. Let $h:\intervalco{\rho}{R}\to\intervalco{0}{\infty}$ be a continuous increasing function with $\int_{\rho}^R\frac{h(r)}{r}\emph{\intd} r=\infty$ for some $\rho\in \intervalco{0}{R}$; suppose that $\lim_{r\to R} h(r) >1$ and $\lim_{r\to R} h(r)\mu_f(r) >e$.

Then, for every $\delta>0$, there exists a constant $C>0$ and an open set $E\subset\intervalco{0}{R}$ of finite $h$-logarithmic measure such that
\[
M_f(r)\leq C\, h(r) \mu_f(r)\big(\log h(r)\big)^{\frac{1}{2}+\delta}\big(\log (h(r)\mu_f(r))\big)^{\frac{1}{2}}\big(\log\log (h(r)\mu_f(r))\big)^{1+\delta}
\]
for every $r\in (\rho,R)$, $r\notin E$.
\end{theorem}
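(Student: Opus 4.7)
The plan is to derive the statement from the ``penultimate'' inequality that appears in the proof of Theorem 1 of Skaskiv--Kuryliak \cite{SkaskivKuryliak2020}, generalized from the specific weight $h(r)=1/(1-r)$ to an arbitrary admissible $h$, and then to collapse a maximum of two competing quantities into their product by an elementary observation.

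First I would re-run the classical central-term / saddle-point Wiman--Valiron argument with the general weight $h$ in place of $h\equiv 1$ or $h(r)=1/(1-r)$. Starting from $M_f(r)\leq G_f(r)$, I would apply Lemma \ref{rogHCELemmaseriesEstimate} to the positive-coefficient power series $G_f$ with auxiliary parameter $\delta/2$ to obtain, outside an open set of finite $h$-logarithmic measure, the derivative control
\[
\sum_{n=0}^\infty n|a_n|r^n \leq h(r)\,G_f(r)\big(\log G_f(r)\big)^{1+\delta/2}.
\]
Then I would split $G_f(r)$ into the block of roughly $N$ indices $n$ for which $|a_n|r^n$ is comparable to $\mu_f(r)$ (contributing $O(N\mu_f(r))$) and the complementary tail, controlling the tail by the derivative estimate, which yields a bound of order $N^{-1}h(r)G_f(r)(\log G_f(r))^{1+\delta/2}$. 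Solving the resulting implicit inequality for $G_f(r)$ and optimizing in the cutoff $N$ produces an estimate of the schematic form
\[
M_f(r) \leq C\,h(r)\mu_f(r)\max\!\Big\{\big(\log h(r)\big)^{\tfrac12+\delta},\ \big(\log(h(r)\mu_f(r))\big)^{\tfrac12}\big(\log\log(h(r)\mu_f(r))\big)^{1+\delta}\Big\}
\]
outside a (possibly enlarged) exceptional set of finite $h$-logarithmic measure. This is exactly the penultimate inequality of \cite{SkaskivKuryliak2020}, and an equivalent bound can alternatively be extracted from \cite[Theorem 2.1]{GrosseErdmann2024} as indicated in its Remark~2.4(c).

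Second, I would collapse the max into a product. The elementary observation is that for any $\varepsilon>0$ and $a,b\geq\varepsilon$ one has $\max(a,b)\leq \varepsilon^{-1}ab$. The hypotheses $\lim_{r\to R}h(r)>1$ and $\lim_{r\to R}h(r)\mu_f(r)>e$ guarantee that, after possibly enlarging $\rho$, both $\log h(r)$ and $\log\log(h(r)\mu_f(r))$ are uniformly bounded below by some $\varepsilon>0$ on $(\rho,R)$. Applying the observation to the two quantities in the max and absorbing $\varepsilon^{-1}$ into the constant $C$ gives exactly the inequality of the statement, with the same exceptional set $E$.

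The main technical obstacle lies in the $\delta$-bookkeeping of the first step. The exponent $\tfrac12+\delta$ on $\log h(r)$ comes from counting the $O(N)$ central terms, while the exponents $\tfrac12$ and $1+\delta$ attached to $\log(h\mu_f)$ and $\log\log(h\mu_f)$ arise from the derivative-tail estimate together with the implicit inversion needed to pass from a bound on $G_f$ in terms of $\log G_f$ back to one in terms of $\log\mu_f$. These have to be balanced carefully, with the auxiliary parameter in Lemma \ref{rogHCELemmaseriesEstimate} chosen strictly smaller than $\delta$, so that after the max-to-product conversion the exponents match those in the statement precisely and no logarithmic factor is lost.
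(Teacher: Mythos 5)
Your second step is exactly the paper's argument: the paper proves this theorem by quoting the penultimate inequality in the proof of Theorem~1 of \cite{SkaskivKuryliak2020} (which is already formulated there with a general weight $h$ and the $h$-measure of the exceptional set, so no rederivation for general $h$ is needed) and then collapsing the maximum into a product via $\max(a,b)\lesssim ab$ for $a,b\geq\varepsilon$; the hypotheses $\lim_{r\to R}h(r)>1$ and $\lim_{r\to R}h(r)\mu_f(r)>e$ serve precisely to bound the two competing factors away from zero, as you say. That part of your proposal is correct.

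The gap is in your first step, the attempted rederivation of the penultimate inequality. The argument you sketch --- bound the central block of about $N$ terms by $N\mu_f(r)$, bound the tail by $N^{-1}\sum_n n|a_n|r^n\leq N^{-1}h(r)G_f(r)\big(\log G_f(r)\big)^{1+\delta/2}$ using Lemma~\ref{rogHCELemmaseriesEstimate}, and optimize in $N$ --- cannot produce the stated exponents. Balancing the two terms gives $G_f(r)\leq 2\big(h(r)\mu_f(r)G_f(r)\big)^{\frac12}\big(\log G_f(r)\big)^{\frac{1+\delta/2}{2}}$, hence $G_f(r)\lesssim h(r)\mu_f(r)\big(\log G_f(r)\big)^{1+\delta/2}$, and after inverting $\log G_f\lesssim\log(h\mu_f)$ you only reach $M_f(r)\lesssim h(r)\mu_f(r)\big(\log(h(r)\mu_f(r))\big)^{1+\delta}$: the exponent on $\log(h\mu_f)$ comes out as $1$, not $\tfrac12$, and neither the factor $\big(\log h(r)\big)^{\frac12+\delta}$ nor the refinement $\big(\log\log(h\mu_f)\big)^{1+\delta}$ appears from this scheme at all. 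The square-root gain is the genuinely hard part of Wiman--Valiron theory; it requires a different mechanism, for instance a two-radius estimate such as $M_f(r)\leq S_f(r')\big(1-(r/r')^2\big)^{-\frac12}$ (or the decay of $|a_n|r^n$ away from the central index) combined with $S_f^2\leq\mu_f G_f$ and a Borel-type control of $\log\mu_f$ at the larger radius --- which is exactly what the proof in \cite{SkaskivKuryliak2020}, or alternatively \cite[Theorem 2.1]{GrosseErdmann2024}, supplies. So either quote that inequality as the paper does (your citation of it is legitimate, and then your proof coincides with the paper's), or replace the sketch by the genuine argument; as written, the rederivation does not establish the bound you need.
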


The additional assumption on $h$ is only needed in order to make sure that the inequality has a sense for large $r$.

Then, based on the results in Section \ref{s-prelim} and proceeding exactly as in the proofs in Section \ref{s-LevyD}, we obtain the following, which contains Theorems \ref{rogHCEThmMain} and \ref{rogHDEThmMain} as special cases.

\begin{theorem}\label{t-mufsf}
Let $0<R\leq \infty$. Let $f(z)=\sum_{n=0}^\infty a_nz^n$ be a non-constant analytic function for $|z|<R$. Let $h:\intervalco{\rho}{R}\to\intervalco{0}{\infty}$ be a continuous increasing function with $\int_{\rho}^R\frac{h(r)}{r}\emph{\intd} r=\infty$ for some $\rho\in \intervalco{0}{R}$; suppose that $\lim_{r\to R} h(r) >1$ and $\lim_{r\to R} h(r)\mu_f(r) >e$. Let $(X_n)_{n\geq 0}$ be an independent centred subgaussian sequence. 

Then $\sum_{n=0}^\infty a_nX_nz^n$ defines almost surely an analytic function for $|z|<R$. Moreover, there exists an open set $E\subset\intervalco{0}{R}$ of finite $h$-logarithmic measure and a constant $C>0$ such that, almost surely, there exists some $r_0\in\intervaloo{\rho}{R}$ such that
\[
\max_{|z|=r}\Big|\sum_{n=0}^{\infty}a_nX_nz^n\Big| \leq C\sqrt{\log\big(h(r)\log (h(r)\mu_f(r))\big)}S_f(r)
\]
for $r_0\leq r<R$, $r\notin E$.
\end{theorem}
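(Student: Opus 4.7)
The plan is to redo the proof of Theorem \ref{rogHDEThmMain} verbatim, with the general weight $h(r)$ in place of $1/(1-r)$ throughout and ``finite logarithmic measure'' replaced by ``finite $h$-logarithmic measure.'' The almost-sure analyticity on $|z|<R$ is immediate from Proposition \ref{rogHCELemmaWellDefined}. For the growth bound, after multiplying $f$ by a constant I may assume $\lim_{r\to R}h(r)\mu_f(r)>e$; the hypothesis $\int_\rho^R h(r)/r\,\intd r=\infty$ together with $h$ increasing forces $h(r)\to\infty$ whenever $R<\infty$, while in the case $R=\infty$ the non-constancy of $f$ gives $\mu_f(r)\to\infty$. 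In either situation, $\varphi(r):=\log(h(r)\mu_f(r))$ tends to infinity as $r\to R$, which is what Lemma \ref{rogHDELemmasubseq} requires.

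The first step is the $h$-analogue of Lemma \ref{rogHDELemmaprobaTail}: for $\alpha>1$ and $\delta>0$ there exist an open $E\subset\intervalco{0}{R}$ of finite $h$-logarithmic measure and $C>0$ such that, for $r\notin E$,
\[
\mathbb{P}\Big(\Big\|\sum_{n=0}^\infty a_n X_n z^n\Big\|_r\geq C\sqrt{\log N}\,S_f(r)\Big)\leq \frac{C}{N^{2\alpha}}
\]
whenever $N\geq h(r)^2\big(\log(h(r)\mu_f(r))\big)^{2+\delta}$. The argument is the disk proof with $h$ substituted for $1/(1-r)$: apply Theorem \ref{t-SkaskivKuryliak} (with parameter $\delta/4$) to $\sum|a_n|z^n$ to obtain $G_f(r)\lesssim (h(r)\mu_f(r))^{1+\delta}$; apply Lemma \ref{rogHCELemmaseriesEstimate} with the given $h$ and parameter $\delta/2$ to dominate $\sum n|a_n|r^n$ by $h(r)G_f(r)(\log G_f(r))^{1+\delta/2}$; split the random series at $N^\alpha$; dominate the tail off the subgaussian bad set $B(N)=\bigcup_{n>N^\alpha}\{|X_n|>n^{1-1/\alpha}\}$ (of probability $\lesssim N^{-2\alpha}$) by $S_f(r)$ using the two previous inequalities; and apply Kahane's Lemma \ref{rogHCELemmasupTrigoPoly} to the head.

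Next, apply Lemma \ref{rogHDELemmasubseq} to $\varphi=\log(h\mu_f)$, $\psi_1=\log\mu_f$, $\psi_2=\log S_f$ (each shifted to lie in $\intervalco{1}{\infty}$), taking $E$ as the union of the exceptional sets produced above and by Theorem \ref{t-SkaskivKuryliak}. Setting
\[
N_{l,k,j}:=h(r_{l,k,j})^2\big(\log(h(r_{l,k,j})\mu_f(r_{l,k,j}))\big)^{2+\delta}
\]
and $A_{l,k,j}:=\{\|\sum a_n X_n z^n\|_{r_{l,k,j}}\geq C\sqrt{\log N_{l,k,j}}\,S_f(r_{l,k,j})\}$, the estimate $S_f\leq G_f\lesssim(h\mu_f)^{1+\delta}$ yields the analogue of \eqref{eq-musg} bounding $j$ in terms of $l$, and $k\leq\varphi(r_{l,k,j})+O(1)\leq l+O(1)$ in the same way; combined with $N_{l,k,j}\geq(\log(h\mu_f))^{2+\delta}\gtrsim l^{2+\delta}$ this forces $\sum_J\mathbb{P}(A_{l,k,j})\lesssim\sum_l l^{2-2\alpha(2+\delta)}<\infty$, so Borel--Cantelli delivers the subsequential bound past some $\omega$-dependent threshold. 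Property (iii) of Lemma \ref{rogHDELemmasubseq} together with the Maximum Principle then transfers the bound to a generic $r\notin E$, and $\sqrt{\log N_{l,k,j}}\asymp\sqrt{\log(h(r)\log(h(r)\mu_f(r)))}$ concludes.

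The main subtlety in executing this plan is the three-parameter bookkeeping in the penultimate step: the summability of $\sum_J\mathbb{P}(A_{l,k,j})$ must hold uniformly across the possible growth rates of $h$. If $h$ is bounded (only possible when $R=\infty$) then $\varphi\asymp\psi_1$ and the $l,k$ axes essentially collapse, while if $h\to\infty$ the three indices genuinely decouple. In both regimes the decisive feature is the $(\log(h\mu_f))^{2+\delta}$ factor in $N_{l,k,j}$, which alone dominates the polynomial multiplicity of triples at level $l$ as soon as $\alpha>1$; keeping track of this uniformity, rather than any new probabilistic input, is the only point where one must depart, notationally, from the disk proof.
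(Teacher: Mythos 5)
Your proposal is correct and is essentially the paper's own argument: the paper proves Theorem \ref{t-mufsf} precisely by invoking the Section \ref{s-prelim} lemmas and Theorem \ref{t-SkaskivKuryliak} and repeating the proofs of Lemmas \ref{rogHDELemmaprobaTail} and \ref{rogHDELemmasubseq} and Theorem \ref{rogHDEThmMain} with $h(r)$ in place of $\frac{1}{1-r}$, which is exactly what you do, including the threshold $N\geq h(r)^2\big(\log(h(r)\mu_f(r))\big)^{2+\delta}$ and the choice $\varphi=\log(h\mu_f)$ (which tends to infinity under the stated hypotheses and makes the three-index Borel--Cantelli bookkeeping go through as you describe).
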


With the usual procedure we then obtain the following, which contains Theorems \ref{t-main1} and \ref{t-main2} as special cases.

\begin{theorem}\label{t-mufmuf}
Let $0<R\leq \infty$. Let $f(z)=\sum_{n=0}^\infty a_nz^n$ be a non-constant analytic function for $|z|<R$. Let $h:\intervalco{\rho}{R}\to\intervalco{0}{\infty}$ be a continuous increasing function with $\int_{\rho}^R\frac{h(r)}{r}\emph{\intd} r=\infty$ for some $\rho\in \intervalco{0}{R}$; suppose that $\lim_{r\to R} h(r) >1$ and $\lim_{r\to R} h(r)\mu_f(r) >1$. Let $(X_n)_{n\geq 0}$ be an independent centred subgaussian sequence. 

Then $\sum_{n=0}^\infty a_nX_nz^n$ defines almost surely an analytic function for $|z|<R$. Moreover, for every $\delta > 0$, there exists an open set $E\subset\intervalco{0}{R}$ of finite $h$-logarithmic measure and a constant $C>0$ such that, almost surely, there exists some $r_0\in\intervaloo{\rho}{R}$ such that
\[
\max_{|z|=r}\Big|\sum_{n=0}^{\infty}a_nX_nz^n\Big| \leq C\, h(r)^{\frac{1}{2}}\mu_f(r)\big(\log h(r)\big)^{\frac{3}{4}+\delta} 
\big(\log (h(r)\mu_f(r))\big)^{\frac{1}{4}}\big(\log\log (h(r)\mu_f(r))\big)^{1+\delta}
\]
for $r_0\leq r<R$, $r\notin E$.
\end{theorem}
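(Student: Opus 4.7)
The plan is to follow the ``usual procedure'' indicated in the statement, adapting the argument used to deduce Theorem \ref{t-main1} from Theorem \ref{rogHCEThmMain}. The idea is to combine the almost sure $S_f$-bound of Theorem \ref{t-mufsf} with a deterministic upper bound on $S_f$ obtained from the elementary inequality
\[
S_f(r)^2 = \sum_{n=0}^\infty |a_n|^2 r^{2n} \leq \mu_f(r) \sum_{n=0}^\infty |a_n| r^n = \mu_f(r) G_f(r)
\]
and the Wiman-Valiron estimate of Theorem \ref{t-SkaskivKuryliak} applied to $\widetilde{f}(z) := \sum_{n=0}^\infty |a_n| z^n$, whose maximum modulus and maximum term equal $G_f$ and $\mu_f$, respectively.

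After multiplying $f$ by a positive constant if necessary so that $\lim_{r\to R} h(r)\mu_f(r) > e$ (which only affects the final constant $C$), Proposition \ref{rogHCELemmaWellDefined} gives almost sure analyticity, and Theorem \ref{t-mufsf} yields an open set $E_1 \subset [0,R)$ of finite $h$-logarithmic measure, a constant $C_1$, and almost surely some $r_1 \in (\rho, R)$ such that for $r_1 \leq r < R$, $r \notin E_1$,
\[
\max_{|z|=r}\Big|\sum_{n=0}^\infty a_n X_n z^n\Big| \leq C_1 \sqrt{\log\bigl(h(r)\log(h(r)\mu_f(r))\bigr)} \, S_f(r).
\]
Meanwhile, Theorem \ref{t-SkaskivKuryliak} applied to $\widetilde{f}$ with parameter $\delta/2$ in place of $\delta$ produces an open set $E_2$ of finite $h$-logarithmic measure outside which
\[
G_f(r) \lesssim h(r)\mu_f(r)(\log h(r))^{1/2+\delta/2}(\log(h(r)\mu_f(r)))^{1/2}(\log\log(h(r)\mu_f(r)))^{1+\delta/2}.
\]
Substituting into $S_f(r) \leq \sqrt{\mu_f(r) G_f(r)}$ yields, outside $E_2$ for $r$ sufficiently close to $R$,
\[
S_f(r) \lesssim h(r)^{1/2}\mu_f(r)(\log h(r))^{1/4+\delta/4}(\log(h(r)\mu_f(r)))^{1/4}(\log\log(h(r)\mu_f(r)))^{1/2+\delta/4}.
\]

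Setting $E := E_1 \cup E_2$ and multiplying the two displayed inequalities, I am left to handle the prefactor $\sqrt{\log h(r) + \log\log(h(r)\mu_f(r))}$. By $\sqrt{a+b} \leq \sqrt{a} + \sqrt{b}$, this is at most $\sqrt{\log h(r)} + \sqrt{\log\log(h(r)\mu_f(r))}$, which upon multiplication with the $S_f$-bound produces two terms of the form $h(r)^{1/2}\mu_f(r)(\log h(r))^{\alpha}(\log(h(r)\mu_f(r)))^{1/4}(\log\log(h(r)\mu_f(r)))^{\beta}$ with $(\alpha,\beta)$ equal to $(3/4+\delta/4,\, 1/2+\delta/4)$ or to $(1/4+\delta/4,\, 1+\delta/4)$. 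Since the hypotheses $\lim_{r\to R} h(r) > 1$ and $\lim_{r\to R} h(r)\mu_f(r) > e$ (after rescaling) imply that $\log h(r)$ and $\log\log(h(r)\mu_f(r))$ are each bounded below by a positive constant for $r$ close to $R$, each of these two terms is dominated up to a multiplicative constant by the target expression $h(r)^{1/2}\mu_f(r)(\log h(r))^{3/4+\delta}(\log(h(r)\mu_f(r)))^{1/4}(\log\log(h(r)\mu_f(r)))^{1+\delta}$. This gives the stated bound. The only real obstacle is this final logarithmic bookkeeping, which must cover both the case when $h$ is bounded and the case when $h$ is unbounded; the rest of the argument is a direct transcription of the proofs of Theorems \ref{t-main1} and \ref{t-main2}.
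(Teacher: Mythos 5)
Your argument is correct and is essentially the paper's own deduction: Theorem \ref{t-mufmuf} is obtained from Theorem \ref{t-mufsf} by the same ``usual procedure'' used for Theorems \ref{t-main1} and \ref{t-main2}, namely the inequality $S_f(r)^2\le\mu_f(r)G_f(r)$ combined with Theorem \ref{t-SkaskivKuryliak} applied to $\sum_{n}|a_n|z^n$, together with the same normalization making $\lim_{r\to R}h(r)\mu_f(r)>e$. The only (cosmetic) difference is the final bookkeeping: the paper's hint is to estimate $\log\big(h(r)\log(h(r)\mu_f(r))\big)$ by $\big(\log h(r)\big)\big(\log(h(r)\mu_f(r))\big)^{\delta}$, while your splitting $\sqrt{a+b}\le\sqrt{a}+\sqrt{b}$ avoids the resulting extra factor $\big(\log(h(r)\mu_f(r))\big)^{\delta/2}$ and thus delivers the stated exponent $\tfrac14$ on $\log(h(r)\mu_f(r))$ exactly.
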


\section{An application to linear dynamics}\label{s-lindyn}

Our results have an immediate application in linear dynamics, the study of dynamical properties of linear operators. We briefly introduce the necessary background. We consider the vector spaces $\mathcal{X}=H(\mathbb{C})$ of entire functions and $\mathcal{X}=H(\mathbb{D})$ of analytic functions on the unit disk $\mathbb{D}$, both endowed with the topology of uniform convergence on compact sets. A (continuous and linear) operator $T:\mathcal{X}\to \mathcal{X}$ is called \textit{hypercyclic} if it has a dense orbit, that is, if there is some function $f\in \mathcal{X}$ whose orbit $\{T^nf : n\geq 0\}$ is dense in $\mathcal{X}$; such a function $f$ is then called \textit{hypercyclic} for $T$. More restrictively, the function $f$ is called \textit{frequently hypercyclic} for $T$ if, for every non-empty open set $U\subset \mathcal{X}$, the set of return times to $U$ has positive lower density, that is,
\[
\underline{\text{dens}}\{n\geq 0: T^nf\in U\}>0,
\]
where $\underline{\text{dens}}\, A = \liminf_{N\to\infty} \frac{1}{N+1}\text{card}\{n\in A: 0\leq n\leq N\}$ for $A\subset \mathbb{N}_0$. An operator that admits a frequently hypercyclic function is itself called \textit{frequently hypercyclic}. A related notion is that of \textit{chaos}, where $T$ is supposed to be hypercyclic and possess a dense set of periodic points, that is, functions $f\in \mathcal{X}$ such that $T^nf=f$ for some $n\geq 1$. For introductions to linear dynamics, see \cite{BayartMatheron2009} and \cite{GrosseErdmannPeris2011}.

Now, a \textit{weighted shift operator} $B_w$ on $\mathcal{X}$ is an operator that maps the analytic function $f(z)=\sum_{n=0}^\infty a_nz^n$ to
\[
(B_wf)(z) = \sum_{n=0}^\infty w_{n+1} a_{n+1}z^n,
\]
where $w=(w_n)_{n\geq 1}$ is a \textit{weight}, that is a sequence of non-zero complex numbers. It is well known that $B_w$ is chaotic on $\mathcal{X}=H(\mathbb{C})$ or $H(\mathbb{D})$ if and only if
\[
\sum_{n=0}^\infty \frac{1}{\prod_{k=1}^n w_k} z^n \in \mathcal{X};
\]
and in that case $B_w$ is frequently hypercyclic, see \cite[Section 4.1 and Corollary 9.14]{GrosseErdmannPeris2011}. 

The best known examples are the differentiation operator $D$ on $H(\mathbb{C})$, where $w=(n)_n$, and the so-called Taylor shift $T$ on $H(\mathbb{D})$, where $w=(1)_n$; in other words,
\[
Df(z)=f'(z) \quad \text{and}\quad Tf(z) = \tfrac{f(z)-f(0)}{z}\ (z\neq 0), Tf(0)=f'(0).
\]
By the above criterion, both operators are frequently hypercyclic.

An interesting problem in this context is that of finding the least possible rate of growth of functions $f$ that are hypercyclic or frequently hypercyclic for a given weighted shift $B_w$. This line of research was initiated by the second author \cite{GrosseErdmann1990} and by Shkarin \cite{Shkarin1993} in the case of hypercyclic functions for the differentiation operator $D$; they showed that, for any function $\phi : (0,\infty) \to [1,\infty)$ with $\lim_{r\to\infty}\phi(r) = \infty$ there exists an entire function $f$ that is hypercyclic for $D$ and a constant $C>0$ such that
\[
M_f(r) \leq C\phi(r)\frac{e^r}{r^{\frac{1}{2}}}
\]
for every $r>0$. And this is optimal in the sense that $\phi$ cannot be dropped. 

The corresponding result for frequent hypercyclicity is due to Drasin and Saksman \cite{DrasinSaksman2012}; they have shown that there exists an entire function $f$ that is frequently hypercyclic for $D$ and a constant $C>0$ such that
\[
M_f(r) \leq C\frac{e^r}{r^{\frac{1}{4}}}
\]
for every $r>0$; this is again optimal in a certain sense. 

Now, the latter result was considerably more demanding than that for hypercyclicity. This motivated Nikula \cite{Nikula2014} to use a probabilistic approach. He assumed $X$ to be a centred subgaussian complex random variable of full support, that is, for any non-empty open set $U\subset \mathbb{C}$ we have that $\mathbb{P}(X\in U)>0$. If $(X_n)_{n\geq 0}$ is a sequence of i.i.d. copies of $X$, then
\[
g(z):=\sum_{n=0}^\infty \frac{X_n}{n!} z^n
\]
defines almost surely an entire function that is frequently hypercyclic for $D$ and for which there exists $r_0>0$ such that
\[
\|g\|_r \leq C\sqrt{\log r}\frac{e^r}{r^{\frac{1}{4}}}
\]
for every $r\geq r_0$. In other words, his probabilistic method led to an extra factor of $\sqrt{\log r}$; see also \cite[Proposition 6]{Nikula2014}.

Similar results for the Taylor shift on $H(\mathbb{D})$ are due to Mouze and Munnier \cite{MouzeMunnier2024} (hypercyclic case), \cite{MouzeMunnier2021b} (frequently hypercyclic case), and \cite[p.\ 627]{MouzeMunnier2021} (frequently hypercyclic case with a probabilistic approach), where the respective rates of growth are of the form
\[
C\phi(r),\quad C\frac{1}{\sqrt{1-r}}\quad\text{and}\quad C\sqrt{\log\frac{1}{1-r}}\frac{1}{\sqrt{1-r}}.
\] 
For further results on other weighted shift operators see \cite{Agneessens2024} and the literature cited there.  

In all of these results, the rate of growth holds for all sufficiently large $r$. The question of a rate of growth with an exceptional set has not been addressed yet in linear dynamics. We can deduce such results immediately from the work in this paper and the following result of the first author \cite[Theorem 4.4]{Agneessens2023a}.

\begin{theorem}
Let $T=B_w$ be a chaotic weighted shift operator on $\mathcal{X}=H(\mathbb{C})$ or $H(\mathbb{D})$ with weight $w=(w_n)_{n\geq 1}$. Let $X$ be a subgaussian complex random variable of full support, and let $(X_n)_{n\geq 0}$ be a sequence of i.i.d. copies of $X$. Then 
\[
g(z):=\sum_{n=0}^\infty \frac{X_n}{\prod_{k=1}^n w_k} z^n
\]
defines almost surely a function from $\mathcal{X}$ that is frequently hypercyclic for $B_w$.
\end{theorem}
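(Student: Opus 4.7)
The plan is to derive the almost-sure frequent hypercyclicity of $g$ from the Birkhoff ergodic theorem by exploiting the fact that the action of $B_w$ on $g$ lifts to the one-sided Bernoulli shift on the coefficient sequence.

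The chaos assumption is equivalent to $\sum_n a_n z^n\in\mathcal{X}$ with $a_n:=1/\prod_{k=1}^n w_k$, so Proposition \ref{rogHCELemmaWellDefined} applied to the i.i.d.\ (hence subgaussian) sequence $(X_n)_{n\geq 0}$ shows that $g(z)=\sum_n a_n X_n z^n$ is almost surely in $\mathcal{X}$, which gives the first assertion. A direct computation then yields
\[
B_w^m g(z)=\sum_{k=0}^{\infty}a_k X_{k+m}\,z^k,
\]
so if $\sigma:(\omega_n)_{n\geq 0}\mapsto(\omega_{n+1})_{n\geq 0}$ denotes the one-sided shift and $\Phi$ is the almost-surely defined measurable map sending $(X_n)_n$ to $g\in\mathcal{X}$, then $B_w^m\circ\Phi=\Phi\circ\sigma^m$ almost surely. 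The i.i.d.\ assumption makes $\sigma$ a Bernoulli shift on the product probability space, hence ergodic.

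Fix a countable basis $(U_j)_{j\geq 1}$ of the topology of $\mathcal{X}$, for instance the sets $\{f\in\mathcal{X}:\sup_{|z|\leq r}|f(z)-p(z)|<\varepsilon\}$ with $p$ a polynomial with complex-rational coefficients and $r,\varepsilon$ positive rationals, $r<R$. For each $j$ the Birkhoff ergodic theorem applied to the indicator of $\Phi^{-1}(U_j)$ yields
\[
\underline{\mathrm{dens}}\{m\geq 0:B_w^m g\in U_j\}=\mathbb{P}(g\in U_j)
\]
almost surely, and intersecting over the countably many $j$ it suffices to prove $\mathbb{P}(g\in U_j)>0$ for every $j$ in order to conclude that $g$ is almost surely frequently hypercyclic for $B_w$.

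The principal difficulty is this last positivity statement, which is where the full-support hypothesis on $X$ enters together with a tail estimate. For a basic neighborhood $U=\{f:\sup_{|z|\leq r}|f(z)-p(z)|<\varepsilon\}$ of a polynomial $p=\sum_{k=0}^{K}b_k z^k$ with $r<R$ (taking $K$ as large as we wish by padding $p$ with zero coefficients), split $g=g_K+s_K$ with $g_K:=\sum_{k=0}^{K}a_k X_k z^k$ and $s_K:=\sum_{k>K}a_k X_k z^k$. Since each $a_k$ is nonzero, the targets $c_k:=b_k/a_k$ lie in $\IC$, and by independence together with full support $\mathbb{P}(|X_k-c_k|<\eta\text{ for all }0\leq k\leq K)>0$ for every $\eta>0$; choosing $\eta$ small enough forces $\sup_{|z|\leq r}|g_K(z)-p(z)|<\varepsilon/2$ on this event. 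For the tail, subgaussianity gives $\sup_n\mathbb{E}|X_n|<\infty$, so
\[
\mathbb{E}\sup_{|z|\leq r}|s_K(z)|\leq\Big(\sup_n\mathbb{E}|X_n|\Big)\sum_{k>K}|a_k|\,r^k\longrightarrow 0
\]
as $K\to\infty$, since $\sum_k|a_k|r^k<\infty$ for $r<R$. Thus $\mathbb{P}(\sup_{|z|\leq r}|s_K(z)|<\varepsilon/2)>0$ for $K$ large, and by the independence of $(X_k)_{k\leq K}$ and $(X_k)_{k>K}$ the two events occur simultaneously with positive probability, yielding $\mathbb{P}(g\in U)>0$ and completing the argument.
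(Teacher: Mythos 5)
Your argument is correct, but note that the paper does not prove this statement at all: it imports it verbatim from the first author's earlier work (\cite[Theorem 4.4]{Agneessens2023a}), so there is no in-paper proof to measure you against. What you have written is a sound self-contained proof along the standard ergodic-theoretic route: the identity $w_{k+1}\cdots w_{k+m}a_{k+m}=a_k$ shows that $B_w^m$ acting on $g$ corresponds to the one-sided Bernoulli shift on the i.i.d.\ coefficient sequence, so the law of $g$ is a $B_w$-invariant, ergodic Borel probability measure on $\mathcal{X}$; your head/tail decomposition (full support of $X$, independence, $a_k\neq 0$, and $\mathbb{E}|X|<\infty$ from subgaussianity) shows this measure has full support; and the Birkhoff theorem applied over a countable basis then gives that almost every realization is frequently hypercyclic. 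This is exactly the classical ``ergodic invariant measure with full support implies almost every vector is frequently hypercyclic'' mechanism going back to Bayart--Grivaux (see \cite[Chapter 9]{GrosseErdmannPeris2011} or \cite{BayartMatheron2009}), and it is also the mechanism behind the cited Theorem 4.4, which works in the greater generality of unconditionally convergent random series in Fréchet spaces; so your route is essentially the intended one rather than a genuinely different proof, with the only economy available being that you could have quoted the invariant-measure criterion instead of re-deriving it via Birkhoff. All the individual steps check out: the first assertion correctly uses Proposition \ref{rogHCELemmaWellDefined} together with the characterization of chaos ($\sum_n a_nz^n\in\mathcal{X}$); the quantifier order in the positivity argument (first $K$ via the tail estimate, then $\eta$ depending on $K$) is handled correctly; and restricting to a countable basis suffices for frequent hypercyclicity since return times to a basic subset minorize those to any open set containing it.
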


Incidentally, the assumption of a full support is crucial. Otherwise there would be a non-empty open set $U\subset\mathcal{X}$ so that 
\[
\mathbb{P}(\exists n\geq 0 : (B_w^ng)(0)\in U) = \mathbb{P}(\exists n\geq 0 : X_n\in U)=0.
\]
Hence, $g$ would almost surely not even be hypercyclic for $B_w$. 

Combining the result above with Theorem \ref{rogHCEThmMain}, we obtain the following.

\begin{theorem}\label{t-fhcrate1}
 Let $T=B_w$ be a chaotic weighted shift operator on $H(\mathbb{C})$ with weight $w=(w_n)_{n\geq 1}$. Let $X$ be a centred subgaussian complex random variable of full support, and let $(X_n)_{n\geq 0}$ be a sequence of i.i.d. copies of $X$. Then 
\[
g(z):=\sum_{n=0}^\infty \frac{X_n}{\prod_{k=1}^n w_k} z^n
\]
defines almost surely an entire function that is frequently hypercyclic for $B_w$. Moreover, there exists a set $E\subset\intervalco{0}{\infty}$ of finite logarithmic measure and a constant $C>0$ such that, almost surely, there exists $r_0>0$ such that
\[
\|g\|_r \leq C\sqrt{\log\log \mu_{f}(r)}S_f(r)
\]
for every $r\geq r_0$, $r\notin E$; here, $f$ is the entire function given by $f(z)=\sum_{n=0}^\infty \frac{1}{\prod_{k=1}^n w_k} z^n$.
\end{theorem}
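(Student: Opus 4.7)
The plan is a direct combination of the two immediately preceding results: the frequent hypercyclicity theorem from \cite{Agneessens2023a} (recalled just above) and the probabilistic Wiman--Valiron estimate of Theorem \ref{rogHCEThmMain}. Both concern the same random series $g(z)=\sum_{n=0}^\infty a_n X_n z^n$, where $a_n = 1/\prod_{k=1}^n w_k$, so the task reduces to verifying their hypotheses and intersecting two events of full probability.

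First I would set $a_n = 1/\prod_{k=1}^n w_k$ and observe that the chaoticity of $B_w$ on $H(\mathbb{C})$ is equivalent, by the criterion recalled in Section \ref{s-lindyn}, to $f(z) = \sum_{n=0}^\infty a_n z^n$ being entire. Since $a_0 = 1$ and no $w_k$ vanishes, $f$ is non-constant, so the hypotheses of Theorem \ref{rogHCEThmMain} on $f$ are satisfied. Next, I would note that an i.i.d. sequence of centred subgaussian copies of $X$ is automatically an independent centred subgaussian sequence in the sense of Definition \ref{definitionSubgaussian}: the constants $K,\tau$ can be taken common to all $X_n$ since they share the same distribution.

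Then I would apply the theorem from \cite{Agneessens2023a} (with $\mathcal{X}=H(\mathbb{C})$) to obtain, on an event $\Omega_1$ of probability one, that $g$ is entire and frequently hypercyclic for $B_w$. Simultaneously, I would apply Theorem \ref{rogHCEThmMain} to $f$ and $(X_n)_{n\geq 0}$ to obtain the open set $E\subset\intervalco{0}{\infty}$ of finite logarithmic measure, the constant $C>0$, and, on an event $\Omega_2$ of probability one, the existence of $r_0>0$ such that
\[
\|g\|_r \leq C \sqrt{\log\log \mu_f(r)}\, S_f(r)
\]
for $r\geq r_0$, $r\notin E$. On the intersection $\Omega_1\cap\Omega_2$, still of probability one, both conclusions hold, which is exactly the statement of Theorem \ref{t-fhcrate1}.

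There is no substantive obstacle: the real work is hidden in the two results already proved. The only point to watch is that $E$ and $C$ in Theorem \ref{rogHCEThmMain} depend only on the sequence $(a_n)_n$ and on the subgaussian constants of the $X_n$, and \emph{not} on $\omega$, which is precisely what the phrasing of Theorem \ref{t-fhcrate1} requires (the exceptional set and the constant are deterministic, only $r_0$ may depend on $\omega$).
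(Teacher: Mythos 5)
Your proof is correct and is essentially the paper's own argument: the theorem is obtained there exactly by combining the frequent hypercyclicity result from \cite{Agneessens2023a} with Theorem \ref{rogHCEThmMain} and intersecting the two almost sure events. Your hypothesis checks (chaoticity of $B_w$ giving that $f$ is entire and non-constant, i.i.d.\ centred subgaussian copies forming an independent centred subgaussian sequence, and the $\omega$-independence of $E$ and $C$) are exactly the points implicit in the paper's one-line deduction.
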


In the same way, we obtain the following with Theorem \ref{rogHDEThmMain}.

\begin{theorem}\label{t-fhcrate2}
 Let $T=B_w$ be a chaotic weighted shift operator on $H(\mathbb{D})$ with weight $w=(w_n)_{n\geq 1}$. Let $X$ be a centred subgaussian complex random variable of full support, and let $(X_n)_{n\geq 0}$ be a sequence of i.i.d. copies of $X$. Then 
\[
g(z):=\sum_{n=0}^\infty \frac{X_n}{\prod_{k=1}^n w_k} z^n
\]
defines almost surely an analytic function on $\mathbb{D}$ that is frequently hypercyclic for $B_w$. Moreover, there exists a set $E\subset\intervalco{0}{1}$ of finite logarithmic measure and a constant $C>0$ such that, almost surely, there exists $r_0\in (0,1)$ such that
\[
\|g\|_r \leq C \sqrt{\log\Big(\frac{1}{1-r}\log\frac{\mu_f(r)}{1-r}\Big)}S_f(r)
\]
for $r_0\leq r<1$, $r\notin E$; here, $f\in H(\mathbb{D})$ is given by $f(z)=\sum_{n=0}^\infty \frac{1}{\prod_{k=1}^n w_k} z^n$.
\end{theorem}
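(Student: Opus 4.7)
The proof will be a direct combination of two ingredients already stated in the paper: the theorem of the first author on frequent hypercyclicity of random series for chaotic weighted shifts (stated immediately before Theorem \ref{t-fhcrate1}), and Theorem \ref{rogHDEThmMain} which provides the probabilistic Wiman-Valiron estimate in the disk.

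First, I will set $a_n = \frac{1}{\prod_{k=1}^n w_k}$ for $n\geq 0$ (with the empty product equal to $1$), so that $f(z) = \sum_{n\geq 0} a_n z^n$ and $g(z) = \sum_{n\geq 0} a_n X_n z^n$. Since $B_w$ is by hypothesis chaotic on $H(\mathbb{D})$, the characterization of chaos for weighted shifts recalled at the start of Section \ref{s-lindyn} guarantees that $f \in H(\mathbb{D})$; moreover $f$ is non-constant because every coefficient $a_n$ is nonzero.

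Second, I will invoke the theorem of the first author preceding Theorem \ref{t-fhcrate1}: since $X$ is a centred subgaussian complex random variable of full support and $B_w$ is chaotic on $H(\mathbb{D})$, the random series $g$ almost surely defines an element of $H(\mathbb{D})$ which is frequently hypercyclic for $B_w$. This delivers the first two assertions of the statement.

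Third, for the growth estimate, I will apply Theorem \ref{rogHDEThmMain} to this $f$ and to the sequence $(X_n)_{n\geq 0}$. Because $(X_n)_{n\geq 0}$ consists of i.i.d. copies of a centred subgaussian random variable, it is in particular an independent centred subgaussian sequence in the sense of Definition \ref{definitionSubgaussian}. Theorem \ref{rogHDEThmMain} then produces an open set $E\subset[0,1)$ of finite logarithmic measure and a constant $C>0$ such that, almost surely, there is $r_0\in(0,1)$ with
\[
\|g\|_r=\max_{|z|=r}\Big|\sum_{n=0}^\infty a_n X_n z^n\Big| \leq C\sqrt{\log\Big(\frac{1}{1-r}\log\frac{\mu_f(r)}{1-r}\Big)}\,S_f(r)
\]
for $r_0\leq r<1$, $r\notin E$, which is exactly the claimed bound. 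Intersecting this probability-one event with the probability-one event on which $g$ is frequently hypercyclic gives a single probability-one event on which both conclusions hold. There is no real obstacle here: the theorem is a packaging result, and the only point to verify is that the hypotheses of the two cited theorems are met simultaneously, which is automatic from the setup.
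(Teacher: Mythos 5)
Your proposal is correct and follows exactly the paper's argument: Theorem \ref{t-fhcrate2} is obtained by combining the first author's theorem on frequent hypercyclicity of the random series for chaotic weighted shifts with the growth estimate of Theorem \ref{rogHDEThmMain}, applied to $f(z)=\sum_{n\geq 0}\frac{1}{\prod_{k=1}^n w_k}z^n$ and the i.i.d.\ centred subgaussian sequence $(X_n)_{n\geq 0}$. The points you verify (that $f\in H(\mathbb{D})$ by the chaos characterization, that $f$ is non-constant, and the intersection of the two almost-sure events) are exactly what is needed and match the paper's intent.
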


By way of an example, let us look again at the two operators mentioned above. 

For the differentiation operator $D$ on $H(\mathbb{C})$ we have that $f(z)=e^z$, and it is well known that $\mu_f(r)\asymp r^{-\frac{1}{2}}e^r$ and $S_f(r)\asymp r^{-\frac{1}{4}}e^r$, see also \cite{Agneessens2024}. Thus
\[
\|g\|_r \leq C \sqrt{\log r}\frac{e^r}{r^\frac{1}{4}}
\]
almost surely, outside a set of finite logarithmic measure.

For the Taylor shift $T$ on $H(\mathbb{D})$ we have that $f(z)=\frac{1}{1-z}$, so that $\mu_f(r)= 1$ and $S_f(r)\asymp \frac{1}{\sqrt{1-r}}$. Thus
\[
\|g\|_r \leq C \sqrt{\log\frac{1}{1-r}}\frac{1}{\sqrt{1-r}}
\]
almost surely, outside a set of finite logarithmic measure.

We see here that our results give less than those of Nikula, and of Mouze and Munnier, who obtain the same inequalities for all large values of $r$. But Theorems \ref{t-fhcrate1} and \ref{t-fhcrate2} above hold for all chaotic weighted shifts.

We refer to the forthcoming paper \cite{Agneessens2024} for rate of growth results without exceptional sets for large classes of chaotic weighted shifts.

\end{document}